\documentclass[11pt,a4paper,reqno]{amsart}

\usepackage{amssymb}
\usepackage{setspace}
\onehalfspacing
\usepackage{amsmath}
\usepackage{amsthm}
\usepackage[utf8]{inputenc}
\usepackage[numbers]{natbib}
\usepackage[english]{babel}
\usepackage[T1]{fontenc}
\usepackage{verbatim}
\usepackage{palatino}
\usepackage{dsfont}
\usepackage{upgreek}
\usepackage{amsfonts}
\usepackage{bbm}
\usepackage{mathrsfs}
\usepackage[dvipsnames]{xcolor}
\usepackage{mathtools}
\usepackage{tikz}
\colorlet{linkequation}{black}
\usepackage[colorlinks=true,  citecolor=black]{hyperref}
\hypersetup{linktocpage}
\allowdisplaybreaks

\addtolength{\hoffset}{-1.15cm}
\addtolength{\textwidth}{2.3cm}
\addtolength{\voffset}{0.45cm}
\addtolength{\textheight}{-0.9cm}

\newtheorem{theorem}{Theorem}[section]
\newtheorem{proposition}[theorem]{Proposition}

\newtheorem{lemma}[theorem]{Lemma}
\newtheorem{definition}[theorem]{Definition}

\newtheorem{remark}[theorem]{Remark}
\newtheorem{corollary}[theorem]{Corollary}

\newtheorem{fact}[theorem]{Fact}

\DeclareMathOperator\conv{conv}

\DeclareMathOperator\diam{diam}
\DeclareMathOperator\dist{dist}
\newcommand{\R}{\mathbb{R}}

\newcommand{\N}{\mathbb{N}}

\theoremstyle{remark}

\address{ 
Department of Mathematics\\
University of Connecticut\\
Storrs, Connecticut 06269
}
\email{zackary.boone@uconn.edu}

\address{
	Department of Mathematics\\
	Virginia Tech\\
	Blacksburg, Virginia 24060
	}
\email{palsson@vt.edu}

\keywords{Newhouse thickness, distance sets, point configurations}
\subjclass[2010]{Primary 28A75, Secondary 52C10}

\date{\today}

\thanks{The work of the second listed author was supported in part by Simons Foundation Grant \#360560.}

\begin{document}
\title{A pinned Mattila-Sj{\"o}lin type theorem for product sets}
\author{Zack Boone and Eyvindur Ari Palsson}
\maketitle

\begin{abstract}
We generalize a result of McDonald and Taylor which concerns the size of the tuples of edge lengths in the set $C_1 \times C_2$ utilizing the notion of thickness. Specifically, we show that $C_1, C_2 \subset \R^d$ compact sets with thickness satisfying $\tau(C_1) \tau(C_2) >1$, then the edge lengths in $C_1 \times C_2$ corresponding to any pinned finite tree configuration has non-empty interior. Originally proven for Cantor sets on the real line by McDonald and Taylor, we use the notion of thickness introduced by Falconer and Yavicoli which allows us to generalize the result of McDonald and Taylor to compact sets in $\R^d$.
\end{abstract}

\renewcommand{\contentsname}{\normalsize Contents}

\vspace*{-6mm}
  \tableofcontents

\section{Introduction}
Falconer's distance problem conjectures a relationship between the structure of a set $E$ and the size of its distance set $\Delta(C) := \{ |x-y| : x,y \in C\}$. The notion of structure and size used by Falconer is Hausdorff dimension and Lebesgue measure denoted $\dim_H(\cdot )$ and $\mathcal{L}(\cdot)$ respectively. In \cite{falconer_original}, Falconer proved that if $C \subset \R^d$ is compact with $\dim_H(C) > \frac{d+1}{2}$ then $\mathcal{L}(\Delta(C)) > 0$. The conjecture is that we can lower the threshold $\frac{d+1}{2}$ to $\frac{d}{2}$. Many improvements have been made on this threshold. When $d \geq 3$, the best known threshold is $\frac{d^2}{2d-1}$ which was first achieved by  Du, Guth, Ou, Wang, Wilson and Zhang in \cite{a_lot} when $d = 3$ and generalized to higher dimensions by Du
and Zhang in \cite{du_zhang}. Interestingly, this threshold can be improved for even dimensions. When restricting $d$ to even integers and for $d \geq 2$, the current threshold is $\frac{d}{2} +\frac{1}{4}$. For $d=2$, this threshold was obtained by Guth, Iosevich, Ou and Wang in \cite{stuff} and generalized to higher even dimensions by Du, Iosevich, Ou, Wang and Zhang in \cite{based}.

Another version of the Falconer distance problem, the pinned distance problem, has also seen much attention. Instead of looking at all possible distances from any two elements in a set, we fix one element in the set and only look at the distances the exist from this fixed point, i.e. for $x \in C$ the pinned distance set is $\Delta_{x}(C) := \{ |x - y| : y \in C\}$. We can then ask what dimensional requirements are needed so that $\mathcal{L}(\Delta_{x}(C)) > 0$. This is a stronger version of the Falconer distance problem since we can write $$\Delta(C) = \bigcup_{x \in C} \Delta_{x}(C)$$ so that if $\Delta_{x}(C)$ has positive Lebesgue measure for some $x \in C$, then we automatically get that $\Delta(C)$ has positive Lebesgue measure. This problem was first investigated by Peres and Schlag in \cite{peres_schlag} where they proved $$\dim_H \left(\left\{ x \in \R^d : \mathcal{L}(\Delta_x(E)) = 0\right\}  \right)\leq d+1 - \dim_H(E) $$ which, in particular, implies that if $\dim_H(E) > \frac{d+1}{2}$ then there exists $x \in E$ such that $\mathcal{L}(\Delta_x(E)) > 0$. Refinements on this problem have been made, particularly when restricted to the plane. For example, Shmerkin in \cite{shmerkin_improvement} proved that for $E \subset \R^2$ one has $$\dim_H\left( \{ x\in \R^2 : \dim_H(\Delta_x(E)) < 1\} \right) \leq 1 $$ when $E$ is a set satisfying $1 < \dim_H(E) = \dim_P(E)$, where $\dim_P(\cdot )$ denotes packing dimension. Moreover, this was improved by Keleti and Shmerkin in \cite{keleti_shmerkin} to the relaxed setting of when $\dim_P(E) \leq 2\dim_H(E) -1$.

Due to Mattila \cite{mattila_textbook_2015}, one method to prove the Falconer distance problem is to show that there exists a measure $\mu$ on $E$ such that $$\int_1^{\infty} \left( \int_{S^{d-1}} |\widehat{\mu}(r \omega) d\omega \right)^2 r^{d-1} dr < \infty, $$ and this approach has been called the Mattila integral approach. Surprisingly, the dimensional thresholds for the pinned and non-pinned versions of the distance problem match when one studies them via the Mattila integral approach. This is due to Liu in \cite{liu_identity} where he establishes an identity for $L^2$ integrals.

A variant of the distance problem asks to find dimensional conditions on $C$ such that not only will it be true that $\mathcal{L}(\Delta(C)) > 0$ but that $\Delta(C)$ will contain an interval. Let $(\cdot)^{\circ}$ denote the interior of a set. This problem was also investigated by Peres and Schlag in \cite{peres_schlag} where they proved a pinned version of this problem. In particular, they showed
\begin{equation}\label{eq:peres_schlag}
    \dim_H \left(\left\{ x \in \R^d : (\Delta_x(E))^{\circ} = \emptyset \right\}  \right) \leq d+2 - \dim_H(E)
\end{equation} which shows that if $\dim_H(E) > \frac{d+2}{2}$ then one can find $x\in E$ such that $\Delta_x(E)$ contains an interval. Iosevich and Liu in \cite{iosevich_liu} obtained the bound $\frac{(d+1)^2}{d-1} - \frac{d+1}{d-1}\dim_H(E)$ which improves on Equation \eqref{eq:peres_schlag} when $\dim_H(E) > \frac{d+3}{2}$.  In the non-pinned setting, Mattila and Sj{\"o}lin in \cite{mattila_sjolin} refined this result when they proved that if $\dim_H(C) > \frac{d+1}{2}$ then $\Delta(C)$ contains an interval, now known as the Mattila-Sj{\"o}lin theorem. Finding when some kind of geometric operation on a set has non-empty interior is an active research area. For example Iosevich, Mourgoglou, and Taylor in \cite{general_nonempty_interior} generalized the Mattila-Sj{\"o}lin theorem to the case where the set  $\Delta_B(E) = \{ \| x- y\|_B : x,y \in E\}$ has non-empty interior, where $\| \cdot \|_B$ is the metric induced by the norm defined by a symmetric bounded convex body $B$
with a smooth boundary and everywhere non-vanishing Gaussian curvature. Improvements on this threshold have been made by Koh, Pham, and Shen in \cite{product_sets} for product sets, i.e. sets of the form $C = E\times \cdots \times E= E^d \subset \R^d$ where $E \subset \R$ compact.

Another example of this, which serves as the basis of this paper, is Theorem 1.8 in \cite{mcdonald_taylor} proven by McDonald and Taylor which will be stated below, and it determines the amount of edge lengths one can find in the set $K_1 \times K_2$ when $K_1, K_2 \subset \R$ are Cantor sets.  Instead of taking Hausdorff dimension to be the structure of the set, McDonald and Taylor take thickness, denoted by $\tau$, to be the structure of their sets.  The definition of thickness is defined below.

 First introduced by Newhouse in \cite{newhouse}, he gave a definition of thickness specifically for Cantor sets on the real line. Newhouse used this notion of thickness to prove that if $K_1, K_2 \subset \R$ are Cantor sets such that $\tau(K_1) \tau(K_2) > 1$ then $K_1 \cap K_2 \neq \emptyset$, a result now known as the Newhouse Gap Lemma. Problems of determining elements in distance sets can be converted into problems of determining intersections of sets, so having a notion of structure which allows one to determine when two sets intersect can be of high importance to questions related to distance sets. Falconer and Yavicoli in \cite{falconeryavicoli} generalized both the definition of thickness and the Newhouse Gap Lemma to arbitrary compact sets in $\R^d$. Before stating the definition we describe the "gaps" of a compact set. \begin{definition}\label{def:gaps}
    Given a compact set $C \subset \R^d$, we define $\{G_n\}_{n=1}^{\infty}$ to be the, at most, countably many open bounded path-connected components of $C^C$ (the complement of $C$) and $E$ to be the unbounded open path-connected component of $C^C$ (except when $d=1$ when $E$ consists of two unbounded intervals). We call $E$ the \textbf{unbounded gap} of $C$ and $\{ G_n\}_{n=1}^{\infty}$ the \textbf{unbounded gaps} of $C$. Furthermore, we assume that the sequence of bounded gaps $\{G_{n}\}_{n=1}^{\infty}$ is ordered by non-increasing diameter. 
    \end{definition}
    We will write $\dist$ for the usual distance between points of non-empty subsets of $\R^d$ and $\diam$ for the diameter of a non-empty subset of $\R^d$.
    
    \begin{definition}[Thickness in $\R^d$]\label{def:thickness}
    The \textbf{thickness} of  $C$ is $$\tau(C) := \inf_{n \in \N} \frac{ \dist\left( G_n , \bigcup_{1 \leq i \leq n-1} G_i \cup E\right)}{\diam(G_n)}, $$ provided that $E$ is not the only path-connected component of $C^C$. When the only complementary path-connected component is $E$, we define \[
  \tau(C) =
  \begin{cases}
                                   +\infty & \text{if $C^{\circ} \neq \emptyset$} \\
  0 & \text{if $C^{\circ} = \emptyset$}
  \end{cases}
\]
We say $C$ is \textbf{thick} if $\tau(C) > 0$.
    \end{definition}
When restricted to $\R$, this definition of thickness is equivalent to one given by Newhouse. With this definition in mind, the kinds of sets that we are considering are ones that look like they have holes which are been punched out of them. Another way of putting it is that the sets we consider need to have some kind of border. Thus, one drawback to this definition is that there are sets which are of importance in geometric measure theory but do not fit into the framework of Definition \ref{def:thickness}. One example is the four corner Cantor set (and its generalizations) because it does not have any bounded gaps. Therefore it should be noted that recently, Yavicoli in \cite{yavicoli_new} gave a different definition of thickness and proved a type of gap lemma which did allow her to study sets such as the four corner Cantor set. Nevertheless, Falconer and Yavicoli in \cite{falconeryavicoli} used the definition of thickness given above to give a  generalized version of the Newhouse Gap Lemma which is stated as follows:
\begin{theorem}[Gap lemma in $\R^d$]\label{thm:gap_lemma}
    Let $C_1, C_2 \subset \R^d$ be compact such that neither of them is contained in a gap of the other and $\tau(C_1)\tau(C_2) > 1$. Then $C_1\cap C_2 \neq \emptyset$.
    \end{theorem}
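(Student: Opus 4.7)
The plan is to argue by contradiction. Assume $C_1 \cap C_2 = \emptyset$, so that $\delta := \dist(C_1, C_2) > 0$ by compactness. The goal is to produce an infinite alternating sequence of bounded gaps---one of $C_1$, then one of $C_2$, and so on---whose diameters shrink by a factor strictly less than $1$ every two iterations, and then extract a common accumulation point in $C_1 \cap C_2$ for the contradiction.

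First I would dispose of the degenerate cases: if either set has non-empty interior, its thickness is $+\infty$ by Definition~\ref{def:thickness}, and a short direct argument using the non-containment hypothesis handles the claim. Otherwise both $C_1^\circ = C_2^\circ = \emptyset$, and I may use the quantitative definition of $\tau$. Choose $x_0 \in C_1$ and $y_0 \in C_2$ realizing $\delta$; the open segment $(x_0,y_0)$ is disjoint from $C_1 \cup C_2$, hence lies in a single path-connected gap $G_0$ of $C_2$ with $y_0 \in \partial G_0$, and in a single gap $H_0$ of $C_1$ with $x_0 \in \partial H_0$. The non-containment hypothesis rules out the possibility that either $G_0$ or $H_0$ is the unbounded component, so each is a bounded gap indexed in the respective enumeration of Definition~\ref{def:gaps}.

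The heart of the proof is an iterative step: given a bounded gap $G$ of $C_2$ with index $n$ in the decreasing-diameter ordering and containing a point $p \in C_1$, I would construct a bounded gap $H$ of $C_1$ containing a point of $C_2$ with a quantitative diameter bound. By non-containment, $C_1 \not\subset G$, so there exists $p' \in C_1$ outside $G$; the line segment $\overline{pp'}$ must cross $\partial G \subset C_2$, and a maximal subsegment of $\overline{pp'}$ lying in a single gap $H$ of $C_1$ then has its endpoints in $C_1$ and visits $C_2$. The thickness inequality for $C_2$ at $G$,
\[
\dist\left(G, \bigcup_{i<n} G_i \cup E\right) \geq \tau(C_2)\,\diam(G),
\]
forces the new gap $H$ to have diameter at most a $\tau(C_2)$-controlled fraction of $\diam(G)$; combining with the analogous inequality applied to $C_1$ at the next step, two rounds of iteration contract diameters by the factor $1/(\tau(C_1)\tau(C_2)) < 1$. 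Iterating, the gap diameters tend to $0$ and compactness produces a point that lies simultaneously in $C_1$ and $C_2$.

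The main obstacle is making the iterative step precise in $\R^d$: Newhouse's original one-dimensional argument exploits the two-endpoint interval structure of gaps, whereas Definition~\ref{def:gaps} allows gaps to be arbitrary bounded path-connected open components. One must carefully identify into which gap of the other set the connecting segment falls, and control its index so that the thickness inequality of Definition~\ref{def:thickness} applies in the direction that yields contraction rather than expansion. The fact that the bounded gaps are ordered by non-increasing diameter is essential here: it is what ensures that the distance bound provided by $\tau$ is measured against $\diam(G)$ rather than against some smaller, uncontrolled quantity, and hence that the two-step contraction factor $1/(\tau(C_1)\tau(C_2))$ really is less than $1$.
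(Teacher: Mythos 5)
The paper does not prove Theorem~\ref{thm:gap_lemma}; it is stated as a citation of Falconer and Yavicoli \cite{falconeryavicoli}, so there is no ``paper's own proof'' against which to compare your approach. Evaluating the sketch on its merits, the high-level plan (contradiction, alternating-gap iteration, geometric contraction) is indeed the Newhouse-style strategy that Falconer and Yavicoli generalize, but the proposal leaves the central mechanism unjustified, and that is a real gap rather than a detail.

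Two concrete problems. First, when the segment $\overline{pp'}$ exits $G$ through a point $q \in \partial G \subset C_2$, the point $q$ lies in \emph{some} path-connected component of $C_1^C$ (since $C_1\cap C_2 = \emptyset$), but nothing in the sketch forces that component to be a \emph{bounded} gap of $C_1$. If it is the unbounded gap $E$, the iteration cannot continue, and the hypothesis that $C_2$ is not contained in a gap of $C_1$ does not by itself rule out an individual point of $C_2$ sitting in $E$. Ruling this out requires a deliberate choice of the auxiliary point $p'$ together with a geometric argument, not mere non-containment. Second, and more seriously, the sentence asserting that the thickness inequality ``forces the new gap $H$ to have diameter at most a $\tau(C_2)$-controlled fraction of $\diam(G)$'' is not a consequence of the inequality as stated --- the inequality bounds the \emph{distance} from $G$ to gaps of $C_2$ with larger diameter and to $E$; it says nothing directly about the diameters of gaps of the \emph{other} set $C_1$. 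Converting a distance lower bound for gaps of $C_2$ into a diameter upper bound for an adjacent gap of $C_1$ is exactly the content of the Newhouse/Falconer--Yavicoli argument; one must show that a large gap $H$ of $C_1$ would force the boundary points of $G$ to be closer to a large gap of $C_2$ (or to $E$) than $\tau(C_2)\diam(G)$, and this involves carefully tracking which boundary points lie where. Asserting the contraction and then iterating does not discharge the core obligation. In $\R^d$ the Falconer--Yavicoli proof also spends substantial effort on connectivity and covering issues (gaps are no longer intervals and boundaries are no longer pairs of points), a difficulty you flag at the end but do not resolve. To make the sketch into a proof, the iterative step needs to be stated as a precise lemma with its own argument; as written it presupposes what it is supposed to establish.
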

 Before stating the result of McDonald and Taylor, we provide some preliminary definitions on graphs and specifically, trees.
 \begin{definition}[Graphs]\label{def:graphs}
A (finite) \textbf{graph} is a pair $G = (V, E)$ where $V$ is a (finite) set and $E$ is a set of 2-element subsets of $V$. If $\{ i, j\} \in E$ we say $i$ and $j$ are \textbf{adjacent} and write $i \sim j$.
\end{definition}

\begin{definition}[Chain and tree graphs]\label{def:tree graph}
The k-\textbf{chain} is the graph on the vertex set $\{1, \cdots, k+1\}$ with $i \sim j$ if and only if $|i - j| =1$. A \textbf{tree} is a connected, acyclic graph; equivalently, a tree is a graph in which any two vertices are connected by exactly one path. If $T$ is a tree, the \textbf{leaves} of $T$ are the vertices which are adjacent to exactly one other vertex of $T$.
\end{definition}
\begin{definition}[G distance sets]\label{def:G distance sets}
Let $G$ be a graph on the vertex set $\{1, \cdots , k+1\}$ with $m$ edges, and let $\sim$ denote the adjacency relation on $G$. Define the $G$ \textbf{distance set} of $B$ to be $$\Delta_{G}(B) := \left\{ \left( |x^i - x^j| \right)_{i \sim j} : x^1, \cdots, x^{k+1} \in B, x^i \neq x^j\right\},$$ where $(a_{i,j})_{i \sim j}$ denotes a vector in $\R^m$ with coordinates indexed by the edges of $G$.
\end{definition}
The usual definition of the distance set is a $1$-chain except with the element $0$ removed, i.e. if we make $G$ a $1$-chain then $\Delta(B) = \Delta_G(B) \cup \{ 0\}$. The element $0$ is removed from $G$ distance sets because we require non-degeneracy conditions in some of the later proofs. McDonald and Taylor in \cite{mcdonald_taylor} used Newhouse's definition to prove the following theorem:
 \begin{theorem}[\textit{Theorem 1.8 in \cite{mcdonald_taylor}}]\label{thm:reference_theorem}
     Let $K_1, K_2 \subset \R$ be Cantor sets satisfying $\tau(K_1) \tau(K_2) > 1$. For any finite tree $T$, the set $\Delta_T(K_1 \times K_2)$ has non-empty interior.
 \end{theorem}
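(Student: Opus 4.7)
The plan is to deduce Theorem~\ref{thm:reference_theorem} from a pinned Mattila--Sj\"olin-type claim combined with an iterative tree construction. Concretely, the goal is to establish the following pinned claim: there exists a non-empty open interval $I \subset (0,\infty)$ such that $I \subseteq \Delta_x(K_1 \times K_2)$ for every $x \in K_1 \times K_2$, where $\Delta_x(K_1\times K_2):=\{|x-y|:y\in K_1\times K_2\}$. Given this, let $T$ be a finite tree with $k+1$ vertices; pick a root $v_0$ and order the remaining vertices $v_1,\dots,v_k$ so that each $v_j$ has a unique parent $p(v_j)$ earlier in the list. For any tuple $(r_e)_{e\in E(T)}\in I^{|E(T)|}$, place $x^{v_0}\in K_1\times K_2$ arbitrarily and inductively pick $x^{v_j}\in K_1\times K_2$ with $|x^{v_j}-x^{p(v_j)}|=r_{\{v_j,p(v_j)\}}$ via the pinned claim applied at the already-placed pin $x^{p(v_j)}$; non-degeneracy $x^{v_j}\ne x^{p(v_j)}$ is automatic from $r_e>0$. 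This exhibits $I^{|E(T)|}\subseteq \Delta_T(K_1\times K_2)$, so the latter has non-empty interior.

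To prove the pinned claim I would reduce to a one-dimensional intersection problem amenable to the Gap Lemma. Fix $x=(x_1,x_2)\in K_1\times K_2$ and $r>0$, and parametrize the circle of radius $r$ about $x$ by the first coordinate: a point $y=(y_1,y_2)\in\R^2$ lies on this circle iff $y_2=\psi^{\pm}_{r,x}(y_1):=x_2\pm\sqrt{r^2-(y_1-x_1)^2}$ for some choice of sign and $|y_1-x_1|\le r$. Hence $r\in \Delta_x(K_1\times K_2)$ iff $K_2\cap\psi^{\pm}_{r,x}(K_1)\ne\emptyset$ for some sign, and I would apply Theorem~\ref{thm:gap_lemma} with $d=1$ to the compact sets $K_2$ and $\psi^{\pm}_{r,x}(K_1)$. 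This requires (a) $\tau(K_2)\cdot\tau(\psi^{\pm}_{r,x}(K_1))>1$, and (b) neither set contained in a gap of the other.

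For (a), observe that $\psi^{\pm}_{r,x}$ is $C^\infty$ on the open interval $(x_1-r,x_1+r)$; for $r$ bounded away from $\max_{y_1\in K_1}|y_1-x_1|$ it is a bi-Lipschitz diffeomorphism on a neighborhood of $K_1$. Standard thickness-under-diffeomorphism estimates for the Falconer--Yavicoli definition (tracing back to Newhouse's $C^2$-stability arguments for Cantor sets) give $\tau(\psi^{\pm}_{r,x}(K_1))$ arbitrarily close to $\tau(K_1)$ once the distortion is made small, and the strict inequality $\tau(K_1)\tau(K_2)>1$ provides exactly the slack needed to maintain the product bound under this nonlinear distortion. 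For (b), I would choose $r$ so that $\psi^{\pm}_{r,x}(\min K_1)$ and $\psi^{\pm}_{r,x}(\max K_1)$ straddle $\conv(K_2)$ (or vice versa), a transparent geometric condition that holds on an open range of $r$.

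The hardest part will be securing (a) and (b) \emph{uniformly} in the pin $x\in K_1\times K_2$, since the inductive tree placement requires a single interval $I$ that serves every possible pin. My approach is a compactness argument: for each $x$ there is a relatively open neighborhood $U_x\subseteq K_1\times K_2$ and an open interval $I_x$ on which both conditions hold; by compactness finitely many such $U_x$ cover $K_1\times K_2$, and the task reduces to arranging the corresponding $I_x$ to share a common subinterval $I$. Continuous dependence of $\psi^{\pm}_{r,x}$ on $(r,x)$ together with the strict slack in $\tau(K_1)\tau(K_2)>1$ should suffice to deform and align these intervals; verifying this quantitative uniformity is the technical heart of the argument. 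Once the uniform pinned claim is in hand, the tree construction in the first paragraph concludes the proof.
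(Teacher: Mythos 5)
Your reduction in the first paragraph is fine \emph{as a conditional}: if there were a single open interval $I$ with $I\subseteq\Delta_x(K_1\times K_2)$ for every $x\in K_1\times K_2$, the greedy placement of vertices along a root-ordering of $T$ would indeed give $I^{|E(T)|}\subseteq\Delta_T(K_1\times K_2)$. But the \emph{uniform} pinned claim is a genuinely stronger statement than what McDonald--Taylor prove, and neither your step (a) nor your compactness argument closes the gap.

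On step (a): the map $\psi^{\pm}_{r,x}$ is not close to an isometry on all of $K_1$ -- its derivative ranges from $0$ (at $y_1=x_1$) to $\infty$ (as $|y_1-x_1|\to r$). Thickness is \emph{not} comparable under a bi-Lipschitz map with non-small distortion constant; the ratio $\tau(\psi(K_1))/\tau(K_1)$ can degrade by roughly the square of the Lipschitz bound. The reason Lemma~\ref{thm:thickness_nearly_preserved_dimension_d} (the higher-dimensional analogue of McDonald--Taylor's Lemma~3.4) gives a bound close to $1$ is precisely that it \emph{restricts} to a tiny box $[u_1,u_1+\delta]\times\cdots$ around a right endpoint $u$ of a gap, on which the derivative of $g_{x,t}$ is nearly constant. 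The argument is inherently local: you must replace $K_1$ by a tiny compact piece $\widetilde{K_1}$ before applying the Gap Lemma, which is why the statement you get out is local.

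On the compactness argument: covering $K_1\times K_2$ by finitely many $U_{x_i}$ does not produce a common subinterval of the $I_{x_i}$. In fact each natural choice of $I_x$ is centered near $|x-u|$ for a reference gap endpoint $u$, and as $x$ ranges over a set of diameter larger than $|I_x|$, the intervals drift past each other and have empty intersection. ``Continuous dependence'' guarantees overlap of $I_x$ and $I_{x'}$ only for $x'$ close to $x$, which is exactly the \emph{local} statement, not the global one.

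What the paper (following McDonald--Taylor) actually proves and uses is the local pin-wiggling result Theorem~\ref{thm:general}: for each $x^0$ there is a neighborhood $S$ of $x^0$ with $\bigl(\bigcap_{x\in S}\Delta_x(C_1\times C_2)\bigr)^{\circ}\neq\emptyset$, and this holds with $C_1,C_2$ replaced by any small compact pieces $\widetilde{C_j}$ near a gap endpoint. The tree is then built iteratively (the ``tree building mechanism''): fix distinct $x^1,\dots,x^{k+1}$, shrink to small boxes around each vertex, peel off a leaf at a time, and invoke the local result at the parent vertex with the restricted piece around the leaf. The output is $I_1\times\cdots\times I_k\subseteq\Delta_T$ with a \emph{different} interval $I_j$ for each edge (each $I_j$ sits near $|x^i-x^{j}|$ for that edge); no common interval is needed, and none is claimed. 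To make your proposal rigorous you would need to either establish the uniform pinned interval from scratch -- which your compactness sketch does not do -- or retreat to the local statement and rebuild the tree via box-shrinking as in the paper.
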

 The goal of this paper is to extend the above theorem to compact sets in $\R^d$ satisfying the same thickness condition, and also extending the work to a pinned tree distance set.
 This theorem uses and extends the work from Simon and Taylor in \cite{simon_taylor}. In that paper, they also prove a Mattila-Sj{\"o}lin type theorem by considering thickness instead of Hausdorff dimension. In particular, Corollary 2.12 in \cite{simon_taylor} states that for $K_1 , K_2 \subset \R$ Cantor, if $\tau(K_1)\tau(K_2) > 1$ then $$\left( \Delta_{x}(K_1 \times K_2) \right)^{\circ} \neq \emptyset.$$ Extensions to pinned tree distance sets for Cantor sets in $\R$ can also be concluded from the work of McDonald and Taylor, which is discussed below.
 
 A key ingredient in proving statements about non-empty interior of tree distance sets is having access to a strong statement about pinned distance sets, which is presented next and is adapted and extended from Lemma 3.5 in \cite{mcdonald_taylor}.
\begin{theorem}\label{thm:general}
	Let $C_1, C_2 \subset \R^d$ be compacts sets satisfying $\tau(C_1) \tau(C_2) >1$. Then for any $x^0 \in C_1 \times C_2$ there exists an open neighborhood $S$ about $x^0$ such that  $$ \bigcap_{x \in S}\Delta_{x} (C_1 \times C_2)$$ has non-empty interior.
	\end{theorem}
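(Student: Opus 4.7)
The plan is to extend the one-dimensional strategy of McDonald--Taylor's Lemma 3.5 (building on Simon--Taylor) to $\R^d$ by constructing a local bilipschitz map $\Phi_{r,x}\colon \R^d \to \R^d$ that encodes the distance constraint $|y-x|=r$, then applying the Falconer--Yavicoli gap lemma (Theorem~\ref{thm:gap_lemma}) directly in $\R^d$. I will first select reference points $y_1^0 \in C_1$ and $y_2^0 \in C_2$ with $y_i^0 \ne x_i^0$, set $s_i := |y_i^0 - x_i^0|$ and $R^0 := \sqrt{s_1^2 + s_2^2}$, and---using the hypothesis $\tau(C_1)\tau(C_2) > 1$ together with a structural property of pinned distance sets of thick compact sets---arrange the ratio $s_1/s_2$ to lie in the open interval $(1/\sqrt{\tau(C_1)\tau(C_2)},\sqrt{\tau(C_1)\tau(C_2)})$. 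The goal becomes: produce an open interval $I$ about $R^0$ and an open neighborhood $S$ of $x^0$ in $\R^{2d}$ such that for every $(r,x) \in I \times S$, some $y \in C_1 \times C_2$ satisfies $|y-x|=r$.

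Fix an orthogonal rotation $\mathcal{R} \in O(d)$ sending $(y_1^0 - x_1^0)/s_1$ to $(y_2^0 - x_2^0)/s_2$, and define
\[
\Phi_{r,x}(y_1) := x_2 + \sqrt{r^2 - |y_1-x_1|^2}\; \mathcal{R}\!\left(\frac{y_1-x_1}{|y_1-x_1|}\right)
\]
for $0 < |y_1-x_1| < r$. By construction, $|y_1-x_1|^2 + |\Phi_{r,x}(y_1)-x_2|^2 = r^2$, and $\Phi_{R^0,x^0}(y_1^0) = y_2^0$. A direct computation in spherical coordinates centered at $x_1$ for the domain and $x_2$ for the codomain shows that $D\Phi_{R^0,x^0}(y_1^0)$ has singular values $s_1/s_2$ (simple) and $s_2/s_1$ (with multiplicity $d-1$); hence $\Phi_{r,x}$ is bilipschitz on a small ball $U_1 \ni y_1^0$ with constant $L$ close to $L_0 := \max(s_1/s_2, s_2/s_1)$, and by our choice of $y_i^0$ this satisfies $L^2 < \tau(C_1)\tau(C_2)$ with room to spare, uniformly for $(r,x)$ near $(R^0,x^0)$.

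Next I localize the thick sets: set $C_i^* := C_i \cap B(y_i^0, \delta_i)$ for small $\delta_i$ chosen so that $\tau(C_i^*)$ remains close to $\tau(C_i)$ (invoking a localization property for the Falconer--Yavicoli thickness). Bilipschitz invariance yields $\tau(\Phi_{r,x}(C_1^*)) \ge \tau(C_1^*)/L^2$, and by shrinking $\delta_i$ and the $(r,x)$-neighborhood one maintains $\tau(\Phi_{r,x}(C_1^*))\,\tau(C_2^*) > 1$. The non-containment hypothesis of Theorem~\ref{thm:gap_lemma} holds at $(R^0,x^0)$ because $y_2^0 = \Phi_{R^0,x^0}(y_1^0)$ lies in $\Phi_{R^0,x^0}(C_1^*) \cap C_2^*$, and persists for nearby $(r,x)$ by continuity of $\Phi_{r,x}$. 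Theorem~\ref{thm:gap_lemma} then produces $y_1 \in C_1^*, y_2 \in C_2^*$ with $\Phi_{r,x}(y_1) = y_2$, so that $(y_1,y_2) \in C_1 \times C_2$ realizes $|y - x| = r$, uniformly over $(r,x) \in I \times S$.

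The main obstacle I expect comes from two structural facts needed from thickness theory in $\R^d$: first, that $\Delta_{x_i^0}(C_i)$ is rich enough to produce values $s_1, s_2$ with $s_1/s_2$ close to $1$, so that the bilipschitz distortion $L^2$ fits under $\tau(C_1)\tau(C_2)$; and second, that the restrictions $C_i \cap B(y_i^0, \delta_i)$ preserve (essentially all of) the thickness of $C_i$ for appropriate $y_i^0$ and small $\delta_i$. Both feed into the essential estimate $\tau(\Phi_{r,x}(C_1^*))\,\tau(C_2^*) > 1$ and both exploit the positive slack in the hypothesis $\tau(C_1)\tau(C_2) > 1$. The uniform verification of the non-containment condition in Theorem~\ref{thm:gap_lemma} as $(r,x)$ varies is a secondary and more routine check, following from continuity and the presence of the common point $y_2^0$ at the base.
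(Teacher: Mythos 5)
Your overall strategy — encode the pinned distance constraint as a map from $C_1$ into $\R^d$, localize, control the thickness of the image, and then invoke Theorem~\ref{thm:gap_lemma} with the convex-hull linking criterion — matches the paper's architecture. The construction of $\Phi_{r,x}$ and the singular-value computation ($s_1/s_2$ radially, $s_2/s_1$ tangentially, condition number $(\max(s_1/s_2,s_2/s_1))^2$) are correct. The continuity argument giving openness of the admissible $(r,x)$-set and persistence of the linking condition is also in the same spirit as the paper's set $U$.

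The genuine gap is the first of the two ``structural facts'' you flag. Your map $\Phi_{r,x}$ has \emph{intrinsic} distortion $(\max(s_1/s_2,s_2/s_1))^2$ at the base point, and this cannot be driven toward $1$ by shrinking the localization radius $\delta_i$: it is determined by the geometry of the sphere map, not by the patch size. So the whole argument hinges on producing $y_1^0\in C_1$, $y_2^0\in C_2$ with $|y_1^0-x_1^0|/|y_2^0-x_2^0|$ inside $(1/\sqrt{\tau(C_1)\tau(C_2)},\sqrt{\tau(C_1)\tau(C_2)})$, and when $\tau(C_1)\tau(C_2)$ is only slightly larger than $1$ this window is tiny. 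Nothing in the hypotheses forces $\Delta_{x_1^0}(C_1)\setminus\{0\}$ and $\Delta_{x_2^0}(C_2)\setminus\{0\}$ to have ratios this close to $1$: for Cantor-like $C_i$ these pinned distance sets can be lacunary (e.g.\ concentrated near a geometric sequence $\{\lambda^{-n}\}$), and two such sets with incommensurate scaling can have all ratios $s_1/s_2$ bounded away from $1$. You invoke a ``structural property of pinned distance sets of thick compact sets'' to supply this, but that property is unproved and would need to carry the full weight of the theorem; as stated the proof does not close.

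The paper sidesteps this distortion problem rather than fighting it. Its distance encoder $g_{x,t}$ distributes $t^2/d$ coordinate-wise, so the Jacobian is a diagonal matrix whose $i$-th entry depends only on $x_i - z_i$. After applying Lemma~\ref{thm:nice_conditions} to rotate so that the boundary point $u^1$ lies on the diagonal and $x^0 = \vec{0}$, and then localizing to a small box $[u_1,u_1+\delta]\times\cdots\times[u_d,u_d+\delta]$ whose corners are chosen to lie in $C_1$ (Lemma~\ref{thm:find_delta_general}), all diagonal entries $-z_i/\sqrt{t^2/d - z_i^2}$ become nearly equal. Hence $g_{x,t}'$ is close to a scalar matrix and the condition number tends to $1$ as the box shrinks — this is exactly what Lemma~\ref{thm:thickness_nearly_preserved_dimension_d} exploits, yielding $\tau(g_{\widetilde x,t}(\widetilde{C_1})) > (1-\varepsilon)\tau_{2\varepsilon-\varepsilon^2}(C_1)$, so that \emph{essentially no thickness is lost} regardless of the ratio $|u^1|/|u^2|$. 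The second structural fact you cite (localization preserving thickness) also needs more care than ``invoking a localization property'' — arbitrary restrictions $C\cap B(y,\delta)$ can destroy thickness — and the paper handles it by choosing $u^1$ as a right endpoint of a gap and an aligned box. To repair your argument you would either need to prove the ratio-matching statement (which looks hard and may be false in general), or replace the sphere map $\Phi_{r,x}$ with a coordinate-wise encoder in the spirit of $g_{x,t}$ that becomes nearly conformal after a suitable rotation.
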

The proof of this theorem is given in Section \ref{section:main}. We note that this result not only states existence of such an $x^0$, but rather we can take any $x^0 \in C_1 \times C_2$ and obtain this result. Furthermore, this theorem gives a Mattila-Sj{\"o}lin type result in the pinned distance setting. Indeed, since we are taking a neighborhood around $x^0$, we immediately obtain the following corollary:
\begin{corollary}
    Let $C_1, C_2 \subset \R^d$ be compacts sets satisfying $\tau(C_1) \tau(C_2) >1$. Then for any $x \in C_1 \times C_2$ we have $$\left( \Delta_x(C_1 \times C_2) \right)^{\circ} \neq \emptyset. $$
\end{corollary}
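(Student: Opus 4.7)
The plan is to deduce the corollary essentially for free from Theorem \ref{thm:general}, which has already done all the heavy lifting. The only observation needed is a trivial set-theoretic monotonicity: intersections of sets are contained in each of their members, so any interior point of the intersection is automatically an interior point of each set appearing in it.

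Concretely, given $x \in C_1 \times C_2$, I would apply Theorem \ref{thm:general} with the choice $x^0 = x$. This produces an open neighborhood $S$ of $x$ such that
\[
U \;:=\; \bigcap_{y \in S} \Delta_{y}(C_1 \times C_2)
\]
has non-empty interior. Since $x \in S$, the set $\Delta_{x}(C_1 \times C_2)$ is itself one of the factors in this intersection, so $U \subseteq \Delta_{x}(C_1 \times C_2)$. Any point in the (non-empty) interior of $U$ is therefore also an interior point of $\Delta_{x}(C_1 \times C_2)$, which immediately yields $(\Delta_{x}(C_1 \times C_2))^{\circ} \neq \emptyset$, as claimed.

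There is no real obstacle here beyond verifying that the hypotheses of Theorem \ref{thm:general} transfer verbatim (they do: the thickness hypothesis $\tau(C_1)\tau(C_2) > 1$ is identical, and $x \in C_1 \times C_2$ is precisely what the theorem allows as a choice for $x^0$). The entire content of the Mattila--Sj\"olin type conclusion in the pinned setting is thus packaged inside Theorem \ref{thm:general}; the strengthening from "some $x$" to "every $x$" comes from the fact that Theorem \ref{thm:general} is stated for arbitrary $x^0 \in C_1 \times C_2$, and the strengthening from $\mathcal{L}(\Delta_x) > 0$ to non-empty interior follows from the uniform-in-$S$ formulation of the intersection.
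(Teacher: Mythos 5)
Your argument is correct and is precisely the observation the paper makes when it states that the corollary is immediate from Theorem \ref{thm:general}: since $x \in S$, the intersection $\bigcap_{y \in S} \Delta_y(C_1 \times C_2)$ is contained in $\Delta_x(C_1 \times C_2)$, so non-empty interior of the former gives non-empty interior of the latter. No further comment needed.
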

In Section \ref{section:application} we construct sets $C_1 \times C_2$, where $C_1, C_2 \subset \R^d$, for which the above theorem can be applied and such that the dimension of $C_1 \times C_2$ approaches $2d-1$. Then note that when $d=1$ (so $C_1 \times C_2 \subset \R^2)$, the threshold of $2d-1$ turns into $1$, which matches the threshold in the Falconer distance conjecture and both the pinned Falconer problem as well as the Mattila-Sj{\"o}lin variant. This gives some evidence to the conjecture that we should expect the same threshold for the pinned Mattila-Sj{\"o}lin question. We also point out that, since we are looking at the product of sets, the threshold that comes from Equation \eqref{eq:peres_schlag} is $\frac{2d+2}{2}$. Thus, we can find a sequence of sets with dimension converging to $2d-1$, which falls below $\frac{2d+2}{2}$ for $d=1$ and matches it when $d=2$. However, for $d\geq 3$, the threshold $\frac{2d+2}{2}$ is lower than $2d-1$ (and thus, better).
 
 Before stating the main result, we fix some notation. Let $T^k$ denote a tree with $k$ edges, i.e. a tree with $k+1$ vertices. Then we define the pinned tree distance set with $k$ edges by $$\Delta_{T_x^k}(C) := \{ (|x^i - x^j|)_{i\sim j} : x^1, \cdots , x^{k+1} \in C, x^i \neq x^j, x = x^i \text{ for some } i \}.$$
Using the above theorem, we can extend the work of McDonald and Taylor to the following, which is the main result of this paper:
\begin{theorem}\label{thm:pinned_tree_set}
Let $C_1, C_2 \subset \R^d$ be compact sets such that $\tau(C_1)\tau(C_2) >1$. Then for any finite pinned tree $T_x^k$ of $k+1$ vertices, the set $\Delta_{T_x^k}(C_1 \times C_2)$ has non-empty interior.
\end{theorem}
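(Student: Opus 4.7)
The plan is to prove Theorem \ref{thm:pinned_tree_set} by induction on the number of edges $k$, using Theorem \ref{thm:general} as the engine at each step. The base case $k=1$ is essentially the corollary to Theorem \ref{thm:general}: for a single-edge tree pinned at $x$, the set $\Delta_{T_x^1}(C_1 \times C_2)$ equals $\Delta_x(C_1 \times C_2) \setminus \{0\}$, which has non-empty interior. For the inductive step, since any tree with at least two vertices has at least two leaves, we can select a leaf $\ell$ of $T^{k+1}$ that is distinct from the pinned vertex; letting $u$ be its unique neighbor, the deletion $T^k := T^{k+1} \setminus \{\ell\}$ is a pinned tree with one fewer edge still containing the pinned vertex.

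To make the induction go through, I would strengthen the statement so that it tracks the position of every vertex of the realization. The inductive claim I would carry is: for any pinned tree $T_x^k$, there exist a base realization $(\hat{x}^v)_v$ with $\hat{x}^{v^*} = x$, open neighborhoods $N_v \subset \R^{2d}$ of each $\hat{x}^v$, and an open set $U \subset \R^k$ of distance tuples, such that for every $y \in N_{v^*} \cap (C_1 \times C_2)$ and every $t \in U$, there is a realization of $T^k$ with pinned vertex at $y$, each vertex placed inside $N_v \cap (C_1 \times C_2)$, and edge lengths equal to $t$. Given this, the inductive step proceeds by applying the strengthened hypothesis to $T^k$, then invoking Theorem \ref{thm:general} at the base position $\hat{x}^u$ to obtain an open set $S \subset \R^{2d}$ around $\hat{x}^u$ and an open interval $V \subset \R$ such that $V \subset \Delta_z(C_1 \times C_2)$ for every $z \in S \cap (C_1 \times C_2)$. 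Shrinking $N_u$ to lie inside $S$ (and correspondingly shrinking $N_{v^*}$ and $U$) forces every realization from the hypothesis to place $x^u$ inside $S$, so for any new edge length $t_\ell \in V$ one may find $x^\ell \in C_1 \times C_2$ with $|x^u - x^\ell| = t_\ell$, extending the realization to $T^{k+1}$. This yields an open rectangle $U \times V$ inside $\Delta_{T_x^{k+1}}(C_1 \times C_2)$.

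The principal obstacle is the compatibility of the nested shrinkings: when $N_u$ is pulled into $S$, the open sets $N_{v^*}$ and $U$ must remain non-empty. This reduces to a local continuity property for the pinned-distance construction underlying Theorem \ref{thm:general}; concretely, given a reference triple $(\hat{x}, \hat{t}, \hat{y})$ with $\hat{x}, \hat{y} \in C_1 \times C_2$ and $|\hat{x} - \hat{y}| = \hat{t}$, one must verify that the companion point $y$ produced for nearby pairs $(x, t)$ can be chosen in a prescribed open neighborhood of $\hat{y}$. This local form of the generalized Gap Lemma, hidden inside the proof of Theorem \ref{thm:general}, is the technical heart of the argument. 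A secondary issue is non-degeneracy, i.e.\ ensuring all $x^v$ are distinct, but this is routine: the base realization can be chosen with distinct vertices and the $N_v$ shrunk to be pairwise disjoint.
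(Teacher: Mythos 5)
Your decomposition is essentially the reverse of the paper's: you peel off a leaf $\ell$ distinct from the pinned vertex and reattach it by applying Theorem~\ref{thm:general} at the neighbor $u$, while the paper works with the modified tree $\widetilde{T^k_x}$ in which the pinned vertex $x$ is a leaf, removes $x$, and reattaches the edge at $x$ itself. The paper then passes through the identity $\Delta_{\widetilde{T_x^k}}(C_1 \times C_2) = \left(\Delta_x(C_1 \times C_2) \setminus \{0\}\right) \times \Delta_{T^{k-1}}(C_1 \times C_2)$ and argues that each factor has non-empty interior. Your strengthened inductive hypothesis --- carrying a base realization, an open neighborhood $N_v$ for every vertex, and an open set $U$ of edge-length tuples so that any nearby pin admits a realization confined to those neighborhoods --- is precisely what such a product decomposition implicitly requires: the new edge length $|x - x^n|$ and the subtree edge lengths are coupled through the position of $x^n$, which must simultaneously anchor the new edge and serve as the root of the $T^{k-1}$ realization, so to produce an open box of tuples one must let $x^n$ vary over an open patch while retaining a common open set of subtree lengths. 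Tracking vertex positions is exactly what renders that coupling controllable, and it is the same device hidden in the paper's commented-out ``tree building mechanism.''

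The obstacle you flag --- a local form of the gap lemma that keeps the companion point in a prescribed neighborhood --- is real but already present in the proof of Theorem~\ref{thm:general}. There the two sets being intersected are cut down to $\widetilde{C_1} \subset C_1 \cap \left([u_1^1, u_1^1 + \delta] \times \cdots \times [u_d^1, u_d^1 + \delta]\right)$ and the analogous box around $u^2$ for $\widetilde{C_2}$; the generalized gap lemma is applied to $\widetilde{C_2}$ and $g_{x,t}(\widetilde{C_1})$, so every companion point it returns has coordinates in $\widetilde{C_1}\times\widetilde{C_2}$, and shrinking $\delta$ confines it to any prescribed neighborhood of $(u^1, u^2)$ uniformly over the admissible range of $(x,t)\in U$. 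Inserting that observation, together with the routine disjointness adjustment you mention for non-degeneracy, closes your plan into a complete argument. It is a more conservative but more transparent route than the paper's product decomposition, which suppresses the dependency between the two factors that your positional hypothesis is built to manage.
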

Note that the requirements for Theorem \ref{thm:pinned_tree_set} are no more stringent than the requirements for Theorem \ref{thm:general}. Thus, we again can construct a wide range of sets of the form $C_1 \times C_2 \in \R^{2d}$ of with dimension going down to dimension $2d-1$.

To conclude the introduction, we note that there have been numerous other results on establishing the existence of configurations of a set from the perspective of Hausdorff dimension. For example, Palsson and Romero Acosta in \cite{francisco_triangles} establish that when $\dim_H(E) > \frac{2}{3}d +1$ for $d \geq 4$, then the set of congruence classes of triangles formed by triples of points in $E$ will have non-empty interior, with further improvements in low dimensions in a very recent preprint \cite{francisco_simplices_low_dim}.

As another variant of the problem, we can describe distances corresponding to a tree as a function $\Phi$, which is a vector-valued function describing the distances between a set of points. There has also been work when allowing $\Phi$ to describe more general operations, and determining when the resulting set under $\Phi$ has non-empty interior. This was done by Greenleaf, Iosevich, and Taylor in \cite{general_phi}
where they considered a suitable class of functions $\Phi : \R^d \times \R^d \rightarrow \R^k$ to establish the resulting 2-point configuration set $\Delta_{\Phi}(E) := \{\Phi(x,y) : x,y \in E \}$ has non-empty interior given lower bounds on $\dim_H(E)$ which depend on $\Phi$. In \cite{k-point_configurations}, the same authors built upon this work to establish similar results when considering more general k-point configurations and in their most recent paper \cite{microlocal_partition} they can even handle triangles matching the result in \cite{francisco_triangles}.

The proof of Theorem \ref{thm:pinned_tree_set} will be done by induction on $k$ (the number of edges in the tree), and heavily relying on Theorem \ref{thm:general}. Proving Theorem \ref{thm:general} will largely follow the same strategy as is done in \cite{mcdonald_taylor}, however there are clear difficulties that arise when going to higher dimensions, and these issues will be discussed throughout this document

\vskip2em
\section{Application to Sierpi{\'n}ski carpets}\label{section:application} Here, we give an example of a sequence of sets, which will be Sierpi{\'n}ski carpets, for which we can apply Corollary \ref{thm:pinned_tree_set} and for which the dimension of the sets in this sequence converges to $2d-1$. For a more detailed discussion and construction of these sets see \cite{falconeryavicoli}. For simplicity, we will assume each function is the iterated function system of the following Sierpi{\'n}ski carpets will have the same contraction ratios. That is, let $n \in \N_{\geq 3}$ be an odd number and set $$D = \{\bold{i} = (i_1, \cdots , i_d) : 1 \leq i_k \leq n \text{ with } (i_1, \cdots , i_d) \neq ((n+1)/2, \cdots, (n+1)/2)   \} .$$ Then the family of maps $\{ f_{\bold{i}} : \bold{i} \in D \}$, where $f_{\bold{i}} :\R^d \rightarrow \R^d$ and is defined by $$f_{\bold{i}}(x_1 , \cdots , x_d) = \left(\frac{x_1+i_1-1}{n}, \cdots , \frac{x_d+i_d-1}{n}  \right), $$ forms an iterated function system, so that there exists a unique and non-empty compact $C \subset \R^d$ such that $C = \bigcup_{\bold{i} \in D} f_{\bold{i}}(C)$. Furthermore, $\dim_H(C) = s$ where $s$ is the unique number satisfying $\#D \left(\frac{1}{n}\right)^s = 1$. For verification of these facts, see \cite{hutchinson}. This means $\dim_H(C) = \frac{\log(\# D)}{\log(n)}$.  The set $C$ corresponds to initially breaking up the unit cube $[0,1]^d$ into $n^d$ boxes and removing the middle one, then iteratively performing this process on the remaining boxes. Since we are only removing the middle one, we have $\#D = n^d-1$. Thus, $\dim_H(C) = \frac{\log(n^d-1)}{\log(n)}$. Furthermore, in \cite{falconeryavicoli} it is shown that $\tau(C) = \frac{n-1}{2\sqrt{d}}.$

Another kind of Sierpi{\'n}ski carpet that we can calculate the Haudsorff dimension and thickness of is one in which instead of just removing the middle box, we only keep the "outer boundary" of boxes. That is, for $n \geq 5$ and $n$ odd, we remove the middle $(n-2)^d$ squares and keep the boxes on the outer edge, then perform this process iteratively on the remaining squares. For this set, say $C$, we have $\dim_H(C) = \frac{\log(n^d-(n-2)^d)}{\log(n)}$. By a similar discussion as found in \cite{falconeryavicoli}, the thickness is then $$\tau(C) = \frac{\frac{1}{n}}{\sqrt{d\left(\frac{n-2}{n} \right)^2}} = \frac{1}{(n-2)\sqrt{d}}.$$ Thus, if we take $C_n$ to be a Sierpi{\'n}ski carpet of the first kind described, with contraction ratio $1/n$, and $C_m$ to be a Sierpi{\'n}ski carpet of the second kind described, with contraction ratio $1/m$. Then if $1 < \tau(C_n) \tau(C_m)$, this yields $2d < \frac{n-1}{m-2}$. If $n$ is fixed, this inequality is satisfied, for example, by taking $m$ to be largest odd integer such that $m < \frac{n-1 + 4d}{2d}$. So an $n \rightarrow \infty$, we choose $m_n$ to a sequence of the largest odd numbers such that $m_n < \frac{n-1 +4d}{2d}$. Then $\dim_H(C_n) \rightarrow d$ and $\dim_H(C_{m_n}) \rightarrow d-1$ as $n\rightarrow \infty$. Thus, for any $x \in C_n \times C_{m_n}$ and any $k \in \N$, the set $\Delta_{T_x^k}(C_n \times C_{m_n})$ has non-empty interior and $$ \dim_H(C_n \times C_{m_n}) = \dim_H(C_n) + \dim_H(C_{m_n}) \rightarrow 2d-1 $$ as $n \rightarrow \infty$. 

\section{Limitations of the Techniques}
First, note that Theorem \ref{thm:pinned_tree_set} immediately establishes the following result:
\begin{corollary}\label{thm:main_3}
    Let $C_1, C_2 \subset \R^d$ be compacts sets such that $\tau(C_1) \tau(C_2) >1$. Then for any finite tree $T$, the set $\Delta_T(C_1 \times C_2)$ has non-empty interior.
\end{corollary}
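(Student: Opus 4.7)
The plan is to deduce Corollary \ref{thm:main_3} in essentially one line from Theorem \ref{thm:pinned_tree_set}, since pinning only restricts the class of tuples one considers without changing the edge-length output. Concretely, given a finite tree $T$ with $k$ edges, I would pick any distinguished vertex of $T$ and any point $x \in C_1 \times C_2$ to play the role of the pin. Comparing Definition \ref{def:G distance sets} with the definition of $\Delta_{T_x^k}$ given just before Theorem \ref{thm:pinned_tree_set}, every admissible tuple $(x^1, \ldots, x^{k+1})$ for the pinned set is also admissible for the unpinned set, and both produce the same edge-indexed vector in $\R^k$. Hence
$$\Delta_{T_x^k}(C_1 \times C_2) \subseteq \Delta_T(C_1 \times C_2)$$
as subsets of the same ambient $\R^k$.

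From here I would simply invoke Theorem \ref{thm:pinned_tree_set}: the hypothesis $\tau(C_1)\tau(C_2) > 1$ is identical, so the pinned tree distance set on the left has non-empty interior. Taking interiors in the above inclusion yields
$$\emptyset \neq \bigl(\Delta_{T_x^k}(C_1 \times C_2)\bigr)^\circ \subseteq \bigl(\Delta_T(C_1 \times C_2)\bigr)^\circ,$$
which is the desired conclusion.

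There is no real obstacle to overcome here; all of the analytic content, including the induction on the number of edges and the careful use of Theorem \ref{thm:general} to handle the pin, has already been absorbed into Theorem \ref{thm:pinned_tree_set}. The one bookkeeping point worth flagging is that the coordinates of the vectors in $\Delta_{T_x^k}$ and $\Delta_T$ are indexed by the same edge set of $T$, so the containment is literally a containment of subsets of a common $\R^k$, and interiors can be compared directly without any reindexing.
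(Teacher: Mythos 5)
Your proposal is correct and is exactly the argument the paper has in mind: the text simply notes that Theorem \ref{thm:pinned_tree_set} ``immediately establishes'' the corollary, and the immediate implication is precisely the inclusion $\Delta_{T_x^k}(C_1 \times C_2) \subseteq \Delta_T(C_1 \times C_2)$ you identify, since dropping the pinning constraint only enlarges the set of admissible tuples while leaving the edge-indexed output vector unchanged.
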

One can then ask how low can we go, in terms of Hausdorff dimension, to obtain the result of the above theorem? Falconer and Yavicoli in \cite{falconeryavicoli} provide two estimates on this subject. The first of these estimates uses the following definitions: \begin{equation*}
    K_1 := \frac{2d (24 \sqrt{d})^d \log \left( 16\sqrt{d} \right)}{1- \frac{1}{2^d}}\,\,\,\,\,\,\,\,\,\,\,\,\,\,\,\, and \,\,\,\,\,\,\,\,\,\,\,\,\,\,\, K_2 := \left( \frac{(24 \sqrt{d})^d (1+4^d 2)}{1- \frac{1}{2^d}} \right)^2
\end{equation*}
where $d$ is being taken as in $\R^d$. 
\begin{theorem}[\textit{Corollary 19 in \cite{falconeryavicoli}}]\label{thm:first_lower_bound_estimate}
    Let $C$ be a compact set in $\R^d$ with positive thickness $\tau$ (so $\diam(C) < + \infty$ and there is a ball $B$ such that $B \cap E \neq \emptyset$ where $E$ is the unbounded gap of $C$). If there exists $c \in (0, d)$ such that $\tau(C)^{-c} \leq \frac{1}{K_2}\beta^c (1- \beta^{d-c})$ for $\beta := \min \{ \frac{1}{4}, \frac{\diam(B)}{\diam(C)} \}$ then $$ \dim_H(C) \geq \dim_H(B \cap C) \geq d- K_1 \frac{\tau(C)^{-d}}{\beta^d |\log (\beta) |} > 0. $$
\end{theorem}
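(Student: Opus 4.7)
The plan is to lower bound $\dim_H(B \cap C)$ by constructing a Frostman-type measure on $B \cap C$ whose mass on balls $B(x,\rho)$ decays like $\rho^s$ for $s = d - K_1 \tau(C)^{-d}/(\beta^d|\log(\beta)|)$, so that the mass distribution principle yields the claim. The construction proceeds by a multiscale argument that uses the thickness hypothesis to prevent $B \cap C$ from becoming too sparse inside $B$. The quantities to track are the number of gaps $G_n$ with $\diam(G_n)$ in each dyadic range, and the covering number of $B \cap C$ at each relevant scale.

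First I would control the large gaps. By the definition of thickness, $\dist(G_n, \bigcup_{i<n} G_i \cup E) \geq \tau(C)\diam(G_n)$, so gaps whose diameters lie in $[r,2r]$ can be enclosed in pairwise separated balls of radius $\gtrsim \tau(C) r$. A volume-packing estimate inside a box of side $\diam(C)$ then forces the count $\#\{n : \diam(G_n) \in [r,2r]\} \lesssim (\diam(C)/(\tau(C) r))^d$, with a constant exponential in $d$ and involving $\sqrt{d}$; this is the origin of the factors $(24\sqrt{d})^d$ appearing in $K_1, K_2$. The seed condition that some ball meets the unbounded gap $E$, together with the quantitative hypothesis $\tau(C)^{-c} \leq K_2^{-1}\beta^c(1-\beta^{d-c})$, then ensures that $B \setminus \bigcup_{\diam(G_n) \geq r} G_n$ retains a nontrivial fraction of $\mathcal{L}(B)$, so that covering $B \cap C$ at scale $r$ requires at least roughly $(\beta \diam(C)/r)^d - K' \tau(C)^{-d}(\diam(C)/r)^d$ balls.

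Second, I would convert this multiscale covering estimate into a Frostman measure. Over dyadic scales $r \in [r_0, \beta \diam(C)]$, with $r_0$ to be optimized, I would distribute unit mass uniformly across the surviving cover elements at each stage and pass to a weak limit $\mu$ supported in $B \cap C$. The density counts above then transfer to $\mu(B(x,\rho)) \lesssim \rho^s$, where the logarithmic factor $|\log(\beta)|$ in $s$ encodes the number of dyadic stages between the initial scale $\diam(C)$ and the final scale $\beta \diam(C)$. The mass distribution principle then yields $\dim_H(B \cap C) \geq s$, and positivity of $s$ under the stated hypothesis is a direct algebraic check from the definitions of $K_1, K_2$.

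The main obstacle is the higher-dimensional geometry. In $\R$, bounded gaps are intervals, the packing argument reduces to arranging disjoint intervals on a line, and the relation between $\diam(G_n)$ and $\mathcal{L}(G_n)$ is tight. In $\R^d$, the gaps are arbitrary bounded path-connected open sets with irregular shape, so $\diam(G_n)$ only crudely controls the enclosing ball, and one must replace each gap by a ball of radius $\diam(G_n)$ and count with respect to that ball's volume, losing a factor of order $(\sqrt{d})^d$ per scale and forcing the constraint $c \in (0,d)$. Propagating this loss uniformly across scales so that the multiscale Frostman construction closes up with the asserted exponent is the technical core of the argument.
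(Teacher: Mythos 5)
This theorem is not proved in the paper: it is quoted verbatim as Corollary~19 from Falconer and Yavicoli \cite{falconeryavicoli}, which the paper cites and uses as a black box in Section~3. So there is no ``paper's own proof'' to compare your attempt against; I will instead evaluate your sketch against what that result actually requires.

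Your high-level plan---a Frostman measure on $B\cap C$ built by a multiscale recursion, with thickness supplying a packing bound on gaps in each dyadic diameter range, and the mass distribution principle closing the argument---is the right framework, and is essentially the Falconer--Yavicoli strategy. But there is a concrete gap in the middle step. From the packing bound $\#\{n:\diam(G_n)\in[r,2r]\}\lesssim (\diam(B)/(\tau(C) r))^d$ together with $\mathcal{L}(G_n)\lesssim \diam(G_n)^d$, the total volume removed by gaps at the single scale $[r,2r]$ is $\lesssim \diam(B)^d\,\tau(C)^{-d}$, which is \emph{independent of $r$}. Summing over dyadic scales from $r$ up to $\diam(B)$ therefore produces a factor of $\log(\diam(B)/r)$, so the cumulative volume of all gaps of diameter $\geq r$ is \emph{not} bounded by a fixed fraction of $\mathcal{L}(B)$ as $r\to 0$; your claimed conclusion that ``$B\setminus\bigcup_{\diam(G_n)\geq r}G_n$ retains a nontrivial fraction of $\mathcal{L}(B)$'' fails in this one-shot form. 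The actual argument must be a genuine level-by-level recursion: at each stage one works inside a ball $B_j$ of radius $r_j$, removes only the gaps whose diameter is comparable to $r_j$, selects sub-balls of radius $\beta r_j$ meeting $C$, and then treats the smaller gaps at later stages, so that the logarithmic accumulation never occurs. Relatedly, introducing a cutoff ``$r_0$ to be optimized'' is a symptom of the same issue---the recursion has to run to all scales for the limiting measure to be supported on $C$---and $|\log\beta|$ does not count ``stages between $\diam(C)$ and $\beta\diam(C)$'' but rather enters as the per-step scale-reduction factor in computing the Frostman exponent, roughly $s\approx d+\log\bigl(1-c\,\tau(C)^{-d}\beta^{-d}\bigr)/|\log\beta|$.
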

\begin{theorem}[\textit{Proposition 21 in \cite{falconeryavicoli}}]\label{thm:second_lower_bound_estimate}
    Let $C_0 \subset \R^d$ be a proper compact convex set, and let $C = C_0 \backslash \bigcup_{i=1}^{\infty} G_i$ where $\{G_i\}_i$ are open convex gaps. Then $$\dim_H(C) \geq d-1 + \frac{\log 2}{\log (2+ 1/ \tau(C))}. $$
\end{theorem}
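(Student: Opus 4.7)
The plan is to reduce the $d$-dimensional lower bound to the classical one-dimensional thickness-to-dimension estimate via a Marstrand-type slicing argument. Since $C_0$ is a proper compact convex set it has nonempty interior; fix any direction $v \in S^{d-1}$ and denote by $L_y := y + \R v$ the line through $y \in v^{\perp}$ in direction $v$. The orthogonal projection $U$ of $C_0$ onto $v^{\perp}$ is a convex subset of $v^{\perp}$ with nonempty relative interior (and hence with positive $(d-1)$-dimensional Hausdorff measure), and for every $y \in \Int U$ the slice $K_y := C \cap L_y$ is a compact subset of the nondegenerate segment $L_y \cap C_0$.

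The first and most delicate step is to show that $\tau(K_y) \geq \tau(C)$ for every $y \in \Int U$. Because each $G_i$ is open and convex, each nonempty $L_y \cap G_i$ is an open subinterval of $L_y$, and these intervals are exactly the bounded gaps of $K_y$ in $L_y$; meanwhile the unbounded gap of $K_y$ in $L_y$ is contained in the unbounded gap $E$ of $C$ in $\R^d$, since $\R^d \setminus C_0$ is connected and unbounded. Applying Definition~\ref{def:thickness} with either index taking the role of the larger one yields the symmetric bounds
\[
\dist(G_i, G_j) \geq \tau(C) \max\{\diam G_i, \diam G_j\} \qquad \text{and} \qquad \dist(G_i, E) \geq \tau(C) \diam G_i.
\]
Combined with the trivial $\diam(L_y \cap G_i) \leq \diam G_i$, a direct verification of the defining ratio gives $\tau(K_y) \geq \tau(C)$, independently of how the ordering by length along $L_y$ permutes the ordering by diameter in $\R^d$.

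Next I would invoke the one-dimensional Newhouse-type estimate: any compact $K \subset \R$ with $\tau(K) > 0$ satisfies $\dim_H(K) \geq \log 2 / \log(2 + 1/\tau(K))$, which follows from the classical Cantor-like construction in which every remaining interval of length $L$ is split by its largest subgap into two pieces each of length at least $L/(2 + 1/\tau)$. Writing $s := \log 2 / \log(2 + 1/\tau(C))$, the previous step gives $\dim_H(K_y) \geq s$ for every $y \in \Int U$. The proof then concludes by a Marstrand-type slicing inequality: if $A \subset \R^d$ is Borel and $\dim_H(A \cap L_y) \geq s$ holds on a set of $y \in v^{\perp}$ of positive $(d-1)$-dimensional Hausdorff measure, then $\dim_H(A) \geq s + (d-1)$ (which one can deduce from the upper Marstrand slicing theorem by contrapositive, or equivalently from a direct Frostman-measure construction on the fibers). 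Applied to $A = C$ this yields
\[
\dim_H(C) \geq d - 1 + \frac{\log 2}{\log(2 + 1/\tau(C))},
\]
as required.

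The hard part is the slicewise thickness estimate. Although each individual ratio comparison is elementary, the ordering by length along $L_y$ may be an arbitrary permutation of the ambient ordering by diameter in $\R^d$, so the inequality $\tau(K_y) \geq \tau(C)$ has to be extracted from the symmetric two-sided separation bound above rather than from a naive term-by-term matching of orderings. A secondary technical concern is the Borel measurability of $y \mapsto \dim_H(K_y)$ required to apply the slicing inequality, but this can be handled by standard measurable-selection or Frostman-measure arguments.
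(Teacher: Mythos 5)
The paper cites this statement from Falconer--Yavicoli (Proposition~21 of \cite{falconeryavicoli}) without supplying a proof, so there is no in-paper argument to compare against; I can only evaluate your sketch on its own terms. Your overall architecture---slice along a pencil of lines, control the slicewise thickness, invoke the one-dimensional Newhouse dimension bound, then lift via Marstrand slicing---is a reasonable reduction. The problem lies in the inequality you flag as the crux and then assert: $\dist(G_i,G_j)\geq\tau(C)\max\{\diam G_i,\diam G_j\}$. Definition~\ref{def:thickness} (equivalently Lemma~\ref{thm:another_thickness}) tests a gap $G_n$ only against gaps of diameter $\geq\diam(G_n)$ and against $E$; unwinding it yields $\dist(G_i,G_j)\geq\tau(C)\min\{\diam G_i,\diam G_j\}$, not the $\max$. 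The $\max$ version is simply false, already on the line: two gaps of lengths $1$ and $\varepsilon$ separated by a bridge of length roughly $\tau\varepsilon$ are consistent with thickness $\tau$, while $\dist\ll\tau\cdot 1$. ``Applying the definition with either index taking the role of the larger one'' is exactly what the definition does not permit.

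The step is repairable without changing your plan, because sliced gap lengths can only shrink while sliced distances can only grow. If $H_n=G_i\cap L_y$ and $H_m=G_j\cap L_y$ are nonempty bounded gaps of the slice $K_y$ with $|H_n|\leq|H_m|$, so that the thickness ratio of $K_y$ at $H_n$ is tested against $H_m$, then
\[
\dist(H_n,H_m)\;\geq\;\dist(G_i,G_j)\;\geq\;\tau(C)\min\{\diam G_i,\diam G_j\}\;\geq\;\tau(C)\,|H_n|,
\]
the final step using $|H_n|\leq\diam G_i$ and $|H_n|\leq|H_m|\leq\diam G_j$. The comparison against the unbounded gap of $K_y$ goes through via $\dist(G_i,E)\geq\tau(C)\diam G_i$, once one notes that positive thickness forces $\overline{G_i}\subset\Int C_0$ for every $i$, so both endpoints of the segment $C_0\cap L_y$ lie in $C$ and the unbounded gap of $K_y$ is exactly $L_y\setminus C_0$. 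With this substitution the slicewise estimate $\tau(K_y)\geq\tau(C)$ holds, and the two remaining ingredients you cite---the one-dimensional bound $\dim_H K\geq\log 2/\log(2+1/\tau(K))$ and the Marstrand slicing inequality for Hausdorff measures---are indeed standard.
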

For $C \subset \R^d$, since $\dim_H(C \times C) \geq 2 \dim_H(C)$ the lower estimates on $\dim_H(C\times C)$ will be of the form $2\left(d - K_1 \frac{\tau(C)^{-d}}{\beta^d |\log (\beta) |} \right)$ and $2 \left(d-1 + \frac{\log 2}{\log (1+ 1/\tau(C))} \right)$. For large thickness, the first estimate is lower (and therefore better since we want to know how small we can make the dimension) and for small thickness, the second estimate is lower (and therefore better). But when considering compact sets $C$ such $\tau(C)$ is close to $1$, this restricts us to the second estimate. This is because to fulfill the requirement $\tau(C)^{-c} \leq \frac{1}{K_2}\beta^c (1- \beta^{d-c}) \leq \frac{1}{K_2}$, since $K_2$ is very large, we need to $\tau(C)$ to be much larger than $1$. Thus, for $\tau(C) >1$ and observing the second estimate, we have $$\dim_H(C \times C) \geq 2 \left(d-1 + \frac{\log 2}{\log (2 + 1/\tau(C))} \right) > 2\left(d-1 + \frac{\log 2}{\log 3} \right).$$ So for $\tau(C)$ close to $1$, we have the Mattila-Sj{\"o}lin type result of Theorem \ref{thm:main_3} where we allow the dimension of our set to get down to $2 \left( d-1 + \frac{\log 2}{\log 3} \right)$. 
Note that this lower bound is worse than what is found in the Mattila-Sj{\"o}lin theorem, which in this case would be $\frac{2d+1}{2}$, with the only improvement on the bound being the case when $d=1$. However, there has been work done on establishing chain and tree configurations from the point of view of Hausdorff dimension rather than thickness. Bennett, Iosevich, and Taylor in \cite{recent_chain} were able to establish the existence of finite chain configurations with the Mattila-Sj{\"o}lin threshold of $\frac{d+1}{2}$, i.e. the assumption they were placing on sets was that $E \subset \R^d$ for $d \geq 2$ and $\dim_H(E) > \frac{d+1}{2}$, which is enough to guarantee the existence of chain configurations. Building on this work, Iosevich and Taylor in \cite{recent_tree} generalized the result to establish the existence of tree configurations using the  Mattila-Sj{\"o}lin threshold.

\section{Discussion of the distance function}\label{section:distance_function}
We now introduce our distance function, $g_{x,t} : \R^d \rightarrow \R^d$. For a fixed $x \in \R^d$ this function will have the property that for $C_1, C_2\subset \R^d$ we will have $t \in \Delta_x (C_1 \times C_2)$ provided that $C_2 \cap g_{x,t}(C_1) \neq \emptyset$. As pointed out in the introduction, problems of distance sets can be turned into problems of finding when sets intersect. This is one variant of that idea. Now fix $(x,t) \in \R^{2d} \times [0, \infty)$ and for $i \in \{1, \cdots , d\}$ define $\widetilde{g}^i : \R^d \rightarrow \R$ by \begin{align*} \widetilde{g}^i(y_1, \cdots ,y_d) &:= x_{i+d} + \sqrt{
	\begin{aligned}
	\frac{1}{d}t^2 - &0 \cdot (x_1 - y_1)^2 - \cdots - 0 \cdot (x_{i-1} - y_{i-1})^2 \\
	&- (x_i - y_i)^2 - 0 \cdot (x_{i+1} - y_{i+1})^2 - \cdots - 0  \cdot (x_d - y_d)^2
	\end{aligned}
	}
	\\ &= x_{i+d} + \sqrt{\frac{1}{d}t^2 - (x_i - y_i)^2} =: g^i(y_i).
	\end{align*}
	We do this because if we set $y_{i+d} = g^i(y_i)$ then $\frac{1}{d}t^2 = (y_{i+d} - x_{i+d})^2 + (y_i - x_i)^2$ for all $i$, which then means that by summing each of these terms we will have $$\sum_{i=1}^{2d}(y_i - x_i)^2 = t^2$$ or in other words, the distance between $(y_1, \cdots ,y_{2d})$ and $(x_1 , \cdots , x_{2d})$ is $t$. Finally defined $g_{x,t} : \R^d \rightarrow \R^d$ by $$g_{x,t}(y_1, \cdots , y_d) = (\widetilde{g}^1(y_1, \cdots , y_d) , \cdots , \widetilde{g}^d(y_1, \cdots , y_d)) = (g^1(y_1), \cdots , g^d(y_d)).$$ Then we do indeed obtain the property that $t \in \Delta_x(C_1 \times C_2)$ provided $C_2 \cap g_{x,t}(C_1) \neq \emptyset$.
	
	Now we list an interesting property of $g_{x,t}$. From here on out, $g_{x,t}'$ will denote the Jacobian matrix of $g_{x,t}$. Then for $z = (z_1, \cdots , z_d) \in \R^d$ we have 
$$
g_{x,t}'(z) = 
\begin{bmatrix}
\frac{d g^1}{dz_1} & 0 & 0 & \cdots & 0 \\
0 & \frac{d g^2}{dz_2} & 0 & \cdots & 0 \\ \vdots & & \ddots & & \vdots \\ 0 & 0 & \cdots & 0 & \frac{d g^d}{dz_d}
\end{bmatrix} =
\begin{bmatrix}
    \frac{x_1 - z_1}{\sqrt{\frac{1}{d}t^2 - (z_1 - x_1)^2}} & 0 & 0 & \cdots & 0 \\ 0 & \frac{x_2 - z_2}{\sqrt{\frac{1}{d}t^2 - (z_2 - x_2)^2}} & 0 & \cdots & 0 \\ \vdots & & \ddots & & \vdots \\
    0 & 0 & \cdots & 0 & \frac{x_d - z_d}{\sqrt{\frac{1}{d}t^2 - (z_d - x_d)^2}}
\end{bmatrix}.$$

Thus, $g'_{x,t}(z)$ is a diagonal matrix. Furthermore, the norm that we will be working with for $g'_{x,t}(z)$ is the operator norm which is the square root of the largest eigenvalue of $g'_{x,t}(z)^T g'_{x,t}(z)$ where $T$ denotes the transpose. Then because $g'_{x,t}(z)$ is diagonal, we have $g'_{x,t}(z)^T = g'_{x,t}(z)$ and therefore $$g'_{x,t}(z)^T g'_{x,t}(z) = \begin{bmatrix}
    \frac{(x_1-z_1)^2}{\frac{1}{d}t^2 - (z_1-x_1)^2} & 0 & 0 & \cdots & 0 \\ 0 & \frac{(x_2-z_2)^2}{\frac{1}{d}t^2 - (z_2-x_2)^2} & 0 & \cdots & 0 \\ \vdots & & \ddots & & \vdots \\ 0 & 0 & \cdots & 0 & \frac{(x_d-z_d)^2}{\frac{1}{d}t^2 - (z_d-x_d)^2}
\end{bmatrix} $$ 
so that \begin{equation}\label{eq:op_norm}\| g'_{x,t}(z) \|_{\text{op}} = \max \left\{ \frac{|x_1 - z_1|}{\sqrt{\frac{1}{d}t^2 - (z_1 - x_1)^2}}, \cdots , \frac{|x_d - z_d|}{\sqrt{\frac{1}{d}t^2 - (z_d - x_d)^2}} \right\} = \max \left\{ \left| \frac{d g^1}{dz_1}\right| , \cdots , \left| \frac{d g^d}{dz_d} \right| \right\}.
\end{equation}
This estimate will come in later. The function $g_{x,t}$ satisfies another useful property.
\vspace{1mm}

\begin{lemma}[Mean Value Theorem for $g_{x,t}$]\label{thm:MVT} Let $a,b \in \R^d$ be given such that $a_i < b_i$ for all $i \in \{1, \cdots , d\}$ and $g_{x,t}$ is defined on  $[a_1, b_1] \times \cdots \times [a_d, b_d]$. Then there exists $z_i \in (a_i, b_i)$ such that $$(b-a)g_{x,t}^{'}(z) = g_{x,t}(b) - g_{x,t}(a)$$ where $z = (z_1, \cdots, z_d)$.
\end{lemma}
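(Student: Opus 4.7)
The key observation to exploit is that $g_{x,t}$ has a separated (product) structure: the $i$-th coordinate function is $\widetilde{g}^i(y_1,\dots,y_d) = g^i(y_i) = x_{i+d} + \sqrt{\tfrac{1}{d}t^2 - (x_i - y_i)^2}$, which depends only on the single variable $y_i$. The Jacobian $g'_{x,t}(z)$ is accordingly diagonal, with $i$-th diagonal entry $(g^i)'(z_i)$. So the proposed vector identity $(b-a)g'_{x,t}(z) = g_{x,t}(b) - g_{x,t}(a)$ reads coordinatewise as
\[
(b_i - a_i)\,(g^i)'(z_i) \;=\; g^i(b_i) - g^i(a_i), \qquad i = 1,\dots,d.
\]
This means the multivariate statement decouples completely into $d$ independent one-variable problems.

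The plan is therefore to apply the classical one-dimensional Mean Value Theorem separately to each $g^i$ on the interval $[a_i,b_i]$. For this I need $g^i$ to be continuous on $[a_i,b_i]$ and differentiable on $(a_i,b_i)$. Continuity on the closed interval is immediate from the hypothesis that $g_{x,t}$ is defined on the closed box, which forces $\tfrac{1}{d}t^2 - (x_i - y_i)^2 \geq 0$ for $y_i \in [a_i,b_i]$. Differentiability on the open interval follows because the square root is smooth away from zero; any point at which the radicand vanishes would have to be an endpoint of $[a_i,b_i]$ (otherwise $g_{x,t}$ could not stay real-valued on a neighborhood). Hence for each $i$ there is some $z_i \in (a_i,b_i)$ with $g^i(b_i) - g^i(a_i) = (b_i - a_i)(g^i)'(z_i)$.

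To finish, I set $z := (z_1,\dots,z_d)$, which lies in the open box $(a_1,b_1)\times\cdots\times(a_d,b_d)$. Reading off the $i$-th entry of $(b-a)g'_{x,t}(z)$ using the diagonal form of $g'_{x,t}(z)$ displayed before the lemma, one gets exactly $(b_i - a_i)(g^i)'(z_i)$, which by the previous step equals the $i$-th entry $g^i(b_i) - g^i(a_i)$ of $g_{x,t}(b) - g_{x,t}(a)$. Since this holds in every coordinate, the vector equation is verified.

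There is really no main obstacle here: the lemma is a packaging of $d$ applications of the scalar MVT, made possible by the diagonal Jacobian noted in the preceding paragraph of the paper. The only thing worth being careful about is checking that each $g^i$ is genuinely differentiable on the open interval $(a_i,b_i)$ and not just on its interior minus a singular set, but that follows because the hypothesis that $g_{x,t}$ is defined on the whole box precludes the square root's argument from vanishing in the interior. The usefulness of the lemma for later arguments presumably comes from combining it with the operator norm bound \eqref{eq:op_norm}, so it is worth recording the coordinatewise conclusion as well as the vector form.
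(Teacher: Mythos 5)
Your proof is correct and takes essentially the same route as the paper: apply the one-variable Mean Value Theorem to each scalar function $g^i$ on $[a_i,b_i]$ and assemble the resulting $z_i$ into the vector $z$, using the diagonal form of $g'_{x,t}$ to match coordinates. The extra remark justifying differentiability of $g^i$ on the open interval (the radicand cannot vanish in the interior if $g_{x,t}$ is defined on the whole box) is a welcome bit of care that the paper leaves implicit, but it does not change the underlying argument.
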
 \begin{proof}
Since each $g^i$ is a mapping from $\R$ to $\R$ and will be differentiable in $[a_i , b_i]$, by the one-dimensional mean value theorem, there exists $z_i \in (a_i, b_i)$ such that $(b_i - a_i) \frac{dg^i}{dz_i} =g^i(b_i) - g^i(a_i).$ So 
\begin{align*}
(b-a)g_{x,t}^{'}(z) = 
\begin{bmatrix}
b_1 - a_1 \\
\vdots \\
b_d - a_d
\end{bmatrix}
\begin{bmatrix}
\frac{dg^1}{dz_1} & 0 & \cdots & 0 \\ \vdots & \ddots & &  \vdots \\
0 & 0 & \cdots &  \frac{dg^d}{dz_d}
\end{bmatrix}  = 
\begin{bmatrix}
(b_1-a_1)\frac{dg^1}{dz_1} \\ \vdots \\
(b_d-a_d)\frac{dg^2}{dz_d}
\end{bmatrix}  &= 
\begin{bmatrix}
g^1(b_1) - g^1(a_1) \\ \vdots \\
g^d(b_d) - g^d(a_d)
\end{bmatrix} \\ &= 
\begin{bmatrix}
g^1(b_1) \\ \vdots \\
g^d(b_d)
\end{bmatrix} -
\begin{bmatrix}
g^1(a_1) \\ \vdots \\
g^d(a_d)
\end{bmatrix} \\ &= 
g_{x,t}(b) - g_{x,t}(a).
\end{align*}
\end{proof}

\section{Proof of Theorem \ref{thm:pinned_tree_set}}\label{section:main}
We start this section with a lemma that allows us to write $\tau$ in a, sometimes, more convenient way.

\begin{lemma}\label{thm:another_thickness}

    Let $C \subset \R^d$ be compact with bounded gaps $\{G_n\}_{n=1}^{\infty}$. Set $\Lambda_n := \{ i \neq n: \diam(G_n) \leq \diam(G_i)\}$. Then we can also write $\tau$ as $$\tau(C) = \inf_{n \in \N} \frac{ \dist\left( G_n , \bigcup_{i \in \Lambda_n} G_i \cup E\right)}{\diam(G_n)}.  $$
    \end{lemma}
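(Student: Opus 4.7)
Let $\tau^*(C)$ denote the right-hand side of the claimed identity. The easy direction is $\tau^*(C) \leq \tau(C)$: since the gaps are listed in non-increasing diameter order, $\{1, \ldots, n-1\} \subseteq \Lambda_n$ for every $n$, so the distance from $G_n$ to the larger set $\bigcup_{i \in \Lambda_n} G_i \cup E$ is no larger than the distance to $\bigcup_{1 \leq i \leq n-1} G_i \cup E$; dividing by $\diam(G_n)$ gives the $\tau^*$-ratio $\leq$ the $\tau$-ratio at every $n$, and taking infima gives $\tau^*(C) \leq \tau(C)$.

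For the reverse inequality, the plan is to partition the indices into tie groups $T_D := \{n : \diam(G_n) = D\}$ and set $U_D := E \cup \bigcup_{m : \diam(G_m) > D} G_m$. For $n \in T_D$ we have
\[
\bigcup_{1 \leq i \leq n-1} G_i \cup E = U_D \cup \bigcup_{\substack{i \in T_D\\ i<n}} G_i \qquad\text{and}\qquad \bigcup_{i \in \Lambda_n} G_i \cup E = U_D \cup \bigcup_{\substack{i \in T_D\\ i \neq n}} G_i.
\]
Using $\dist(x, A \cup B) = \min(\dist(x,A),\dist(x,B))$ extended to arbitrary unions, together with the symmetry of $\dist$, I would then verify that the two quantities
\[
\inf_{n \in T_D} \dist\bigl(G_n,\textstyle\bigcup_{i<n} G_i \cup E\bigr) \qquad\text{and}\qquad \inf_{n \in T_D} \dist\bigl(G_n,\textstyle\bigcup_{i \in \Lambda_n} G_i \cup E\bigr)
\]
both collapse to
\[
\min\!\Bigl\{\inf_{n \in T_D} \dist(G_n, U_D),\ \inf_{\{i,j\} \subseteq T_D,\, i \neq j} \dist(G_i, G_j)\Bigr\}.
\]
For the first, as $n$ ranges over $T_D$ and $i$ ranges over $\{i \in T_D : i<n\}$ the ordered pairs $(i,n)$ enumerate each unordered pair $\{i,j\} \subseteq T_D$ exactly once; for the second, each unordered pair arises twice, which is irrelevant when taking a minimum. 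Dividing by $D$ and taking the infimum over all values of $D$ then gives $\tau(C) = \tau^*(C)$.

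The only subtle step is the combinatorial reduction of the two infima within a single tie group to the same symmetric expression. Conceptually the lemma is saying that $\tau(C)$ does not depend on how ties in the ordering of $\{\diam(G_n)\}$ are broken, and the $\Lambda_n$-formulation makes this ordering-independence manifest by replacing the enumeration-dependent set $\{1,\ldots,n-1\}$ with the enumeration-free set of all other gaps of diameter at least $\diam(G_n)$.
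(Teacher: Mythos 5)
Your proof is correct, but it takes a genuinely different route from the paper's. The paper argues by contradiction: after observing the easy inequality $\tau^*(C) \le \tau(C)$, it assumes strict inequality, uses the infimum property to extract an index $N$ where the $\Lambda_N$-ratio falls below $\tau(C)$, then finds a specific witness gap $G_M$ with $M > N$ and $\diam(G_M)=\diam(G_N)$ nearly attaining that distance, and finally uses the symmetry of $\dist$ to show the pair $(G_M, G_N)$ already appears in the original infimum defining $\tau(C)$ at index $M$, a contradiction. Your argument is direct and structural: you partition the index set into tie groups $T_D$ by diameter value, observe that within each tie group both $\bigcup_{i<n} G_i \cup E$ and $\bigcup_{i\in\Lambda_n}G_i\cup E$ decompose as $U_D$ together with the remaining gaps in $T_D$, and then show that both infima over $n\in T_D$ collapse to the same symmetric quantity $\min\{\inf_{n\in T_D}\dist(G_n, U_D),\ \inf_{\{i,j\}\subseteq T_D}\dist(G_i,G_j)\}$ because ordered pairs $(i<n)$ enumerate each unordered pair once while pairs $(i\neq n)$ enumerate each twice. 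The paper's contradiction proof is shorter on the page but leaves the mechanism somewhat implicit; yours makes the ordering-invariance of $\tau$ completely explicit and avoids the $\eta_1,\eta_2$ bookkeeping needed to extract witnesses from infima. One small point worth making explicit in your write-up: for the smallest index $n$ in $T_D$ the set $\{i\in T_D : i<n\}$ is empty, so the inner minimum over that set should be read as $+\infty$ (it drops out of the outer $\min$), and likewise $\dist(G_n, U_D)$ should be interpreted as $+\infty$ if there are no gaps of diameter $>D$ and $E$ were absent — though here $E$ is always present in $U_D$, so this never actually occurs.
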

    \begin{proof}
    To see this, assume that we cannot. Set $$\widehat{\tau}(C) := \inf_{n \in \N} \frac{ \dist\left( G_n , \bigcup_{i \in \Lambda_n} G_i \cup E\right)}{\diam(G_n)}.$$ First note that since $\diam(G_n) \geq \diam(G_{n+1})$ we have $\widehat{\tau}(C) \leq \tau(C)$ as we could only be increasing the number gaps in our analysis using $\Lambda_n$ (as would be the case if we had $\diam(G_i) = \diam(G_n)$ with $i > n$). So therefore we may assume for contradiction that $\widehat{\tau}(C) < \tau(C)$. Thus, there exists $\eta_1 > 0$ such that $\widehat{\tau}(C) + \eta_1 = \tau(C)$. Since $\widehat{\tau}$ is an infimum, by properties of infimums we can find $N \in \N$ such that $$\frac{\dist \left( G_N , \bigcup_{i \in \Lambda_N} G_i \cup E \right)}{\diam(G_N)} < \widehat{\tau}(C) + \eta_1 = \tau(C).$$ Also note that this means $$\frac{\dist \left( G_N , \bigcup_{i \in \Lambda_N} G_i \cup E \right)}{\diam(G_N)} < \tau(C) \leq \frac{\dist \left( G_N , \bigcup_{1 \leq i \leq N-1} G_i \cup E \right)}{\diam(G_N)}$$ and therefore, the gap with the smallest distance is not achieved by $E$ since $E$ is present in both the LHS and RHS of the above inequality. Now set $\eta_2 > 0$ such that $$\frac{\dist \left( G_N , \bigcup_{i \in \Lambda_N} G_i \cup E \right)}{\diam(G_N)} + \eta_2 = \tau(C).$$ Because $\dist$ is an infimum property, because the minimal distance to $G_N$ is not achieved by $G_i$ with $1 \leq i \leq N-1$, and because we are always assuming $\diam(G_n) \geq \diam(G_{n+1})$, we can find $M \in \Lambda_N$ such that $M >N$, $\diam(G_M) = \diam(G_N)$, and $$\frac{\dist(G_N, G_M)}{\diam(G_M)} < \frac{\dist \left( G_N , \bigcup_{i \in \Lambda_N} G_i \cup E \right)}{\diam(G_N)} + \eta_2 = \tau(C).$$ But since $M > N$ and $\diam(G_M) = \diam(G_N)$ this implies $$\frac{\dist (G_N, G_M)}{\diam(G_N)} = \frac{\dist(G_M, G_N)}{\diam(G_M)} \geq \frac{\dist \left(G_M , \bigcup_{1 \leq i \leq M-1} G_i \cup E \right)}{\diam(G_M)}$$ which means $$\frac{\dist \left(G_M , \bigcup_{1 \leq i \leq M-1} G_i \cup E \right)}{\diam(G_M)} < \tau(C).$$ However this is a contradiction since $\tau$ is an infimum over all such elements and thus, we can write $$\tau(C) = \inf_n \frac{\dist \left( G_n , \bigcup_{1 \leq i \leq n-1} G_i \cup E \right)}{\diam (G_n)} = \inf_n \frac{\dist \left( G_n , \bigcup_{i \in \Lambda_n} G_i \cup E \right)}{\diam (G_n)}.$$
    \end{proof}
    
Throughout the proof of Theorem \ref{thm:general} and Lemma \ref{thm:thickness_nearly_preserved_dimension_d}, it will be convenient for our pinned point to be the origin and for us to be able work with boundary points of gaps of $C_1$ and $C_2$ to exist on the diagonal. This seems to be, initially, a highly restrictive condition to put on our sets $C_1$ and $C_2$. Fortunately though, we can do this by utilizing the fact that affine transformations are isometries and that thickness is preserved under affine transformations. The next lemma characterizes this shift.

\begin{lemma}\label{thm:nice_conditions}
Let $C_1, C_2 \subset \R^d$ with $\tau(C_1)\tau(C_2) > 1$. Let $x^0 \in C_1 \times C_2$ be given and let $u^1 \in C_1$, $u^2 \in C_2$ be such that they are boundary points of some gap (could also be the unbounded gaps) of $C_1$ and $C_2$ respectively. Then there exists affine transformations $S_1 : \R^d \rightarrow \R^d$ and $S_2 : \R^d \rightarrow \R^d$ with the following properties:
\begin{enumerate}
    \item $x^0 \mapsto \vec{0}$  under $S_1$ and $S_2$
    \item $u^1 \mapsto w^1 =(w_1^1 , \cdots, w_d^1)$ under $S_1$ such that $w_i^1 = w_1^1$ for all $i$ and $u^2 \mapsto w^2 = (w_1^2 , \cdots, w_d^2)$ under $S_2$ such that $w_i^2 = w_1^2$ for all $i$
    \item $\tau(S_1(C_1)) = \tau(C_1)$ and $\tau(S_2(C_2)) = \tau(C_2)$
    \item $\Delta_{\vec{0}} (S_1(C_1) \times S_2(C_2)) = \Delta_{x^0} (C_1 \times C_2)$.
\end{enumerate}
\end{lemma}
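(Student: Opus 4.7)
The natural approach is to build each $S_j$ as a translation followed by a rotation: the translation to send the pin to the origin, the rotation to place the chosen boundary point on the diagonal of $\R^d$. Writing $x^0 = (x^0_1, x^0_2)$ with $x^0_j \in C_j$, I would set
\[S_j(y) = R_j(y - x^0_j), \qquad j = 1, 2,\]
where $R_j \in O(d)$ is any rotation satisfying $R_j\bigl((u^j - x^0_j)/|u^j - x^0_j|\bigr) = (1, \ldots, 1)/\sqrt{d}$. Such $R_j$ exists because $O(d)$ acts transitively on the unit sphere $S^{d-1}$. In the degenerate case $u^j = x^0_j$, I take $R_j$ to be the identity; property (2) is then automatic since $S_j(u^j) = \vec{0}$ already has all coordinates equal.

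\textbf{Key steps.} First, properties (1) and (2) follow directly from the construction: $S_j(x^0_j) = R_j(\vec{0}) = \vec{0}$, and $S_j(u^j) = (|u^j - x^0_j|/\sqrt{d})(1, \ldots, 1)$, whose coordinates are all equal. Second, for property (3) I use that each $S_j$ is a Euclidean isometry, hence preserves diameters and distances, sends the path-connected components of $C_j^c$ bijectively onto those of $S_j(C_j)^c$, and maps the unique unbounded component to the unique unbounded component. Every ratio entering Definition \ref{def:thickness} is therefore preserved, giving $\tau(S_j(C_j)) = \tau(C_j)$. Third, for property (4) the product map $S := S_1 \times S_2 : \R^{2d} \to \R^{2d}$ is itself a Euclidean isometry, since
\[|S(y) - S(y')|^2 = |S_1(y_1) - S_1(y_1')|^2 + |S_2(y_2) - S_2(y_2')|^2 = |y - y'|^2.\]
Because $S(x^0) = \vec{0}$ and $S(C_1 \times C_2) = S_1(C_1) \times S_2(C_2)$, the identity $|y - x^0| = |S(y) - \vec{0}|$ gives $\Delta_{\vec{0}}(S_1(C_1) \times S_2(C_2)) = \Delta_{x^0}(C_1 \times C_2)$.

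\textbf{Main obstacle.} There is really no substantive obstacle here; the lemma is a setup step that packages a routine change of coordinates, and the authors presumably state it explicitly so that later proofs may assume, without loss of generality, that the pin sits at the origin and the selected boundary points lie on the diagonal. The only item warranting a brief explicit check is the isometry-invariance of thickness in property (3), which reduces to the elementary observation that an isometry of $\R^d$ permutes the bounded gaps of $C$, preserves their diameters, and preserves distances between them and to the unbounded gap.
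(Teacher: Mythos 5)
Your proposal is correct and follows essentially the same route as the paper: translate the pin to the origin, rotate the boundary point onto the diagonal using the transitivity of the orthogonal group on the sphere, and invoke isometry-invariance of thickness and of Euclidean distance for properties (3) and (4). The only cosmetic differences are that you use $O(d)$ where the paper uses $SO(d)$, and you explicitly flag the degenerate case $u^j = x^{0,j}$, which the paper does not.
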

\begin{proof}
Let $x^{0,1} : = (x_1^0, \cdots , x_d^0)$ and $x^{0,2} : = (x_{d+1}^0 , \cdots , x_{2d}^0)$. Find $z^1, z^2 \in \R^d$ such that $x^{0,1}+z^1 = \vec{0}$ and $x^{0,2} + z^2 = \vec{0}$. Let $SO(d)$ denote the group of $d$-dimensional rotations. Then there exists $A_1 , A_2 \in SO(d)$ such that $A_1(u^1 + z^1) = w^1 = (w_1^1, \cdots , w_d^1)$ such that $w_i^1 = w_1^1$ for all $i \in \{1, \cdots, d\}$ and $A_2 (u^2 + z^2) = w^2 =(w_1^2, \cdots w_d^2)$ such that $w_i^2 = w_1^2$ for all $i$, i.e. we are rotating the points $u^i + z^i$ so that they are on the diagonal. Furthermore, elements in $SO(d)$ are linear. So for any $x \in \R^d$ we have that the function $S_i : \R^d \rightarrow \R^d$ defined by $$S_i(x) = A_i(x+ z^i) = A_i x + A_i z^i$$ which shows that $S_i$ is an affine transformation. Therefore, since thickness is preserved under affine transformations we get $\tau(S_i(C_i)) = \tau(C_i)$. Furthermore, both translations by $z^i$ and rotation by $A_i$ are isometries which means that $S_i$ is an isometry. Thus, for $t \in \Delta_{\vec{0}} (S_1(C_1) \times S_2(C_2))$ we can find $y^1 \in C_1$ and $y^2 \in C_2$ such that \begin{align*}
    t^2 &= \left[ \dist(S_1(x^{0,1}), S_1(y^1)) \right]^2 + \left[ \dist(S_2(x^{0,2}), S_2(y^2)) \right]^2 \\ &= \left[ \dist(x^{0,1} , y^1) \right]^2 + \left[ \dist(x^{0,2}, y^2) \right]^2 \\ &= \sum_{i=1}^d (x_i^0 - y_i^1)^2 + \sum_{j = d+1}^{2d} (x_j^0 - y_j^2)^2
\end{align*}
and therefore, $t \in \Delta_{x^0} (C_1 \times C_2).$ Since equality held throughout, we can say $\Delta_{x^0} (C_1 \times C_2) = \Delta_{\vec{0}}(S_1(C_1) \times S_2(C_2))$.
\end{proof}

Since we are now working in $\R^d$ we now to deal with more complicated sets and in particular, more complicated boundaries. But the next series of facts, which is presented without proof, allows us nicely characterize how the boundary and diameter of a set is affected by $g_{x,t}$.
\begin{fact}
    Suppose $f$ is continuous and $H$ is open where $f(H) = H'$ are open sets with $H$ having compact closure. Then $f(\partial H) = \partial H'$.
\end{fact}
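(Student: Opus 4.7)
The plan is to reduce the identity $f(\partial H)=\partial H'$ to two ingredients: (i) $f(\overline{H})=\overline{H'}$, and (ii) injectivity of $f$ on $\overline{H}$. Granted these, the proof is immediate: since $H$ and $H'$ are open one has $\partial H=\overline{H}\setminus H$ and $\partial H'=\overline{H'}\setminus H'$, and injectivity lets the set difference commute with $f$, giving
\[
 f(\partial H) \;=\; f(\overline{H})\setminus f(H) \;=\; \overline{H'}\setminus H' \;=\; \partial H'.
\]

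For (i), the forward inclusion $f(\overline{H})\subseteq\overline{H'}$ is sequential continuity: any $x\in\overline{H}$ is a limit of some $x_n\in H$, so $f(x)=\lim f(x_n)\in\overline{H'}$. The reverse inclusion uses the compact-closure hypothesis: $f(\overline{H})$ is the continuous image of a compact set, hence compact and therefore closed, and it contains $f(H)=H'$, so it contains $\overline{H'}$. This half is a clean point-set exercise requiring nothing beyond continuity and compactness.

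The main, and really only, obstacle is (ii): the statement as printed does not flag injectivity, and without it the claim is false. For instance, $f(x)=x^2$ on $H=(-1,-\tfrac{1}{2})\cup(\tfrac{3}{5},1)$ satisfies the stated hypotheses with $f(H)=(\tfrac{1}{4},1)$ open, yet $f(\partial H)=\{\tfrac{1}{4},\tfrac{9}{25},1\}\neq\{\tfrac{1}{4},1\}=\partial f(H)$. In the intended application $f=g_{x,t}$, which on any product box where each component $g^i$ is strictly monotone is a diffeomorphism onto its image; thus injectivity on $\overline{H}$ is automatic (and, being a continuous injection from a compact set, $f$ is a homeomorphism onto $f(\overline{H})$), and the argument collapses to the routine manipulation above.
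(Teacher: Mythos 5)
The paper lists this as one of a ``series of facts, which is presented without proof,'' so there is no argument in the source to compare against; what you are really being asked to do is supply one, and your analysis is sound. In fact it is more careful than the paper itself: the assertion as printed is false, your counterexample $f(x)=x^2$ on $H=(-1,-\tfrac12)\cup(\tfrac35,1)$ is valid (here $f(H)=(\tfrac14,1)$ is open and $\overline{H}$ is compact, yet $f(3/5)=9/25$ lands in the interior of $f(H)$, not on its boundary), and the hypothesis you identify as missing --- injectivity of $f$ on $\overline{H}$ --- is exactly what makes the conclusion true. Your corrected proof is correct and essentially minimal: since $H$ and $H'$ are open, $\partial H=\overline{H}\setminus H$ and $\partial H'=\overline{H'}\setminus H'$; the identity $f(\overline{H})=\overline{H'}$ follows from continuity for $\subseteq$ and from compactness of $\overline{H}$ for $\supseteq$ (the image $f(\overline{H})$ is compact hence closed and contains $H'$); and injectivity gives $f(\overline{H}\setminus H)=f(\overline{H})\setminus f(H)$. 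You are also right that the omission is harmless in the intended application: $f=g_{x,t}$ is only ever applied on small boxes on which each coordinate map $g^i$ is strictly monotone --- indeed Lemma~\ref{thm:thickness_nearly_preserved_dimension_d}, the very lemma that invokes this fact, explicitly assumes $dg^i/du_i\neq 0$ --- so $g_{x,t}$ is a homeomorphism onto its image there and the corrected statement applies. The fact as printed should either carry an explicit injectivity (or homeomorphism-onto-image) hypothesis or be phrased directly for $g_{x,t}$ on such a box; as a blanket claim about continuous maps with open image it does not hold.
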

In our language, this will mean $g_{x,t}(\partial H) = \partial g_{x,t}(H)$ where $H$ is a gap of $C$.

\begin{fact}
    If $H$ is open, then $\diam(H) = \diam(\partial H)$.
\end{fact}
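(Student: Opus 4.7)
The plan is to establish $\diam(H) = \diam(\partial H)$ by splitting it into two equalities, $\diam(H) = \diam(\overline{H})$ and $\diam(\overline{H}) = \diam(\partial H)$. Throughout I tacitly assume $H$ is bounded (the regime relevant to the paper, where $H$ will be a bounded complementary gap $G_n$; in the unbounded case both sides are $+\infty$ and nothing is to prove).

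The first equality is routine. Since $H \subseteq \overline{H}$ one has $\diam(H) \leq \diam(\overline{H})$, and the reverse follows from density: given $x, y \in \overline{H}$ and $\varepsilon > 0$, pick $x', y' \in H$ with $|x - x'|, |y - y'| < \varepsilon/2$, so that $|x' - y'| > |x - y| - \varepsilon$; taking suprema and sending $\varepsilon \to 0$ finishes the claim.

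For the second equality, $\partial H \subseteq \overline{H}$ gives $\diam(\partial H) \leq \diam(\overline{H})$ at once. The substantive step is the reverse inequality, which I would prove by pushing each endpoint of a nearly extremal pair outward along their common line until it strikes the boundary. Given $x, y \in \overline{H}$ with $x \neq y$, consider the ray $\{y + t(x - y) : t \geq 1\}$ and set $T := \{t \geq 1 : y + t(x - y) \in \overline{H}\}$. By closedness and boundedness of $\overline{H}$, the set $T$ is closed, bounded, and contains $1$, hence has a maximum $t^* \geq 1$; let $p := y + t^*(x - y) \in \overline{H}$. For $t$ slightly larger than $t^*$, the point $y + t(x - y)$ lies in the open set $\overline{H}^c \subseteq H^c$, so every neighborhood of $p$ meets $H^c$, giving $p \in \overline{H^c} \cap \overline{H} = \partial H$. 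Since $|p - y| = t^*|x - y| \geq |x - y|$, running the analogous extension starting from $p$ and going through $y$ past $y$ produces a second boundary point $q \in \partial H$ with $|q - p| \geq |p - y| \geq |x - y|$; hence $\diam(\partial H) \geq |x - y|$. Taking the supremum over $x, y \in \overline{H}$ yields $\diam(\partial H) \geq \diam(\overline{H})$.

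The main subtlety is verifying that the extremal ray endpoint $p$ truly lies on $\partial H$ rather than in the interior of $\overline{H}$; this is secured by the maximality of $t^*$ together with the openness of $\overline{H}^c$, which supplies points outside $\overline{H}$ arbitrarily close to $p$. The only other small point is the boundedness assumption, without which $t^*$ could fail to be finite. With the two equalities in hand we conclude $\diam(H) = \diam(\overline{H}) = \diam(\partial H)$.
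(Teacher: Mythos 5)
The paper presents this statement (along with Facts 5.4 and 5.6) explicitly ``without proof,'' so there is no argument in the paper to compare against; I will assess the proposal on its own terms. Your argument is correct for bounded open $H$, which is the case the paper actually uses (the $H$'s that appear are bounded gaps $G_n$). The decomposition into $\diam(H) = \diam(\overline{H})$ and $\diam(\overline{H}) = \diam(\partial H)$ is clean; the density argument for the first is routine, and the ray-pushing argument for the second is sound. In particular, the maximality of $t^*$ together with openness of $\overline{H}^{\,c}$ does place $p$ in $\overline{H} \cap \overline{H^c} = \partial H$, and repeating the extension from $p$ through $y$ produces a second boundary point $q$ with $|q - p| = s^*|y - p| \ge |y - p| = t^*|x-y| \ge |x - y|$; taking suprema gives $\diam(\partial H) \ge \diam(\overline{H})$. (An equivalent route: $\overline{H}$ is compact, so its diameter is attained at some pair $a,b$, each of which must lie on $\partial\overline{H} \subseteq \partial H$ by the usual ``push a little further'' observation; either argument is fine.)

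One small inaccuracy worth flagging: your aside that ``in the unbounded case both sides are $+\infty$'' is not true in general. For instance $H = \R^d \setminus \{0\}$ is open and unbounded with $\partial H = \{0\}$, so $\diam(\partial H) = 0$ while $\diam(H) = +\infty$; likewise $H = \R^d$ gives $\partial H = \emptyset$. So boundedness is a genuine hypothesis, not merely a case in which ``nothing is to prove.'' This does not affect the argument you actually give, since you restrict to bounded $H$ from the outset, but the parenthetical should be corrected or simply dropped.
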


Combining these two facts gives us the next fact. 

\begin{fact}\label{thm:useful_fact}
Since $g_{x,t}$ is continuous we have $$\diam (g_{x,t}(H)) = \diam (\partial g_{x,t}(H)) = \diam (g_{x,t}(\partial H)).$$
\end{fact}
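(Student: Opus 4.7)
The plan is to deduce this fact by chaining the two preceding facts, applied to $f = g_{x,t}$ and to the gap $H$ viewed as our open set. The target equality has two halves, so I will argue them separately and then string them together.

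First I would verify the hypotheses needed to invoke Fact 2. Since $H$ is a (bounded) gap of $C$, it is open and bounded; its closure is compact. I would then argue that $g_{x,t}(H)$ is itself an open set. For this, note from the explicit formula that $g_{x,t}$ factors coordinatewise as $g_{x,t}(y_1,\dots,y_d)=(g^1(y_1),\dots,g^d(y_d))$, where each $g^i(y_i) = x_{i+d} + \sqrt{\tfrac{1}{d}t^2-(y_i-x_i)^2}$ is a continuous, strictly monotone (hence open) map on the part of its one-dimensional domain where the radicand is strictly positive. Provided the gap $H$ lies in the domain where the square roots are strictly positive (which is the regime in which the whole argument of Theorem \ref{thm:general} takes place), $g_{x,t}$ is an open map on $H$, so $g_{x,t}(H)$ is open. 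Applying Fact 2 to $g_{x,t}(H)$ then yields
\[
\diam\bigl(g_{x,t}(H)\bigr)=\diam\bigl(\partial g_{x,t}(H)\bigr).
\]

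Second, I would invoke Fact 1 with $f=g_{x,t}$, which is continuous, and with the open bounded set $H$ whose closure is compact. Since $g_{x,t}(H)$ is open (as established above), Fact 1 gives the set-theoretic identity
\[
g_{x,t}(\partial H)=\partial g_{x,t}(H).
\]
Taking diameters of both sides then produces
\[
\diam\bigl(\partial g_{x,t}(H)\bigr)=\diam\bigl(g_{x,t}(\partial H)\bigr),
\]
and combining this with the previous display completes the chain of equalities.

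The main obstacle, if one wants to be scrupulous, is not the chaining but the underlying openness and domain check for $g_{x,t}$: one must make sure that $H$ sits inside the open set $\{y\in\R^d:(y_i-x_i)^2<t^2/d\text{ for each }i\}$ where each $g^i$ is defined and has a nonzero derivative, so that $g_{x,t}$ is a local diffeomorphism and hence open. Once this is ensured (and it is in the setting where the fact gets used, since the gaps under consideration lie inside the pin's admissible box), the fact is essentially immediate from Facts 1 and 2.
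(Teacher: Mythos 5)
Your proof is correct and matches the paper's approach exactly: the paper simply remarks that ``combining these two facts gives us the next fact,'' and you spell out precisely that chaining, applying the diameter identity to the open set $g_{x,t}(H)$ and then the boundary identity $g_{x,t}(\partial H)=\partial g_{x,t}(H)$. The added verification that $g_{x,t}$ is an open map on the relevant domain (so that $g_{x,t}(H)$ is indeed open, as the first fact requires) is a hypothesis the paper leaves implicit, and you are right that it is the one point worth checking.
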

Thus, when trying to characterize the diameter of some gap under $g_{x,t}$, it suffices to look how the boundary is affected by $g_{x,t}$. Then next part of this discussion characterizes how thickness is changed under $g_{x,t}$.

As pointed out in Section \ref{section:distance_function}, we will have that $t \in \Delta_x(C_1 \times C_2)$ provided $C_2 \cap g_{x,t}(C_1) \neq \emptyset$. So to use the Gap Lemma we need to understand the thickness of the set $g_{x,t}(C_1)$. To do this, we introduce a notion of $\varepsilon$-thickness that gives us some breathing room to work with in later calculations. The following definition is adapted from Definition 3.2 in \cite{mcdonald_taylor}.

\begin{definition}\label{def:ep_thickness}
Let $C \in \R^d$ be a compact set with bounded gaps $\{ G_n \}_{n=1}^{\infty}$. Let $u_n \in \partial G_n$ and define $H_{\varepsilon ,n} := \{i \neq n: (1-\varepsilon)\diam(G_n) < \diam(G_i) \}. $ Then the $\varepsilon$-thickness of $C$ at $u_n$ is defined to be $$\tau_{\varepsilon}(C, u_n) :=  \frac{ \dist \left(u_n , \bigcup_{i \in H_{\varepsilon, n}} G_i \cup E\right)}{\diam(G_n)}$$ and the \textbf{$\varepsilon$-thickness} of $C$ is $$\tau_{\varepsilon}(C) = \inf_{n \in \N} \inf_{u_n} \tau_{\varepsilon}(C, u_n) $$ the infimum being taken over all boundary points of all gaps.
\end{definition}

This definition leads to two important properties, which we quickly prove. 
\begin{proposition}\label{thm:quick_properties}
Let $C \subset \R^d$ be compact. Then \begin{itemize}
    \item[(i)] If $\varepsilon_1 < \varepsilon_2$ then $\tau_{\varepsilon_2}(C) \leq \tau_{\varepsilon_1}(C)$
    \item[(ii)] $\tau_{\varepsilon}(C) \rightarrow \tau(C)$ as $\varepsilon \rightarrow 0$.
\end{itemize}
\end{proposition}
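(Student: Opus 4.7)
The plan is to prove both parts by a direct set-theoretic comparison of the index sets $H_{\varepsilon,n}$, leaning on Lemma \ref{thm:another_thickness} to control distances from gaps to other gaps of comparable or larger diameter.

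For part (i), first observe that if $\varepsilon_1 < \varepsilon_2$ then $(1-\varepsilon_1)\diam(G_n) > (1-\varepsilon_2)\diam(G_n)$, so the condition defining $H_{\varepsilon_2,n}$ is weaker than that defining $H_{\varepsilon_1,n}$. Consequently $H_{\varepsilon_1,n}\subseteq H_{\varepsilon_2,n}$ for every $n$, which enlarges the union of obstructing gaps and can only decrease the distance from any boundary point $u_n$. After dividing by the common factor $\diam(G_n)$ and taking the double infimum over $n$ and $u_n\in\partial G_n$, this yields $\tau_{\varepsilon_2}(C)\le\tau_{\varepsilon_1}(C)$.

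For part (ii), I first get the easy upper bound. Since for every $i\in\Lambda_n$ we have $\diam(G_i)\ge\diam(G_n)>(1-\varepsilon)\diam(G_n)$, the inclusion $\Lambda_n\subseteq H_{\varepsilon,n}$ holds. Combining this with Lemma \ref{thm:another_thickness} and the identity $\inf_{u_n\in\partial G_n}\dist(u_n,S)=\dist(G_n,S)$ (valid because $G_n$ is open and $S$ is disjoint from $\overline{G_n}$) gives $\tau_\varepsilon(C)\le\tau(C)$.

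The matching lower bound is the substantive step, and the small twist is that some $i\in H_{\varepsilon,n}$ may fail to lie in $\Lambda_n$. The idea is to split the estimate: for $i\in\Lambda_n$, Lemma \ref{thm:another_thickness} applied at $G_n$ gives $\dist(G_n,G_i)\ge\tau(C)\diam(G_n)$; for $i\in H_{\varepsilon,n}\setminus\Lambda_n$ one has $\diam(G_i)<\diam(G_n)$, hence $n\in\Lambda_i$, so applying Lemma \ref{thm:another_thickness} at $G_i$ instead yields
\[
\dist(G_n,G_i)\ge\tau(C)\diam(G_i)>\tau(C)(1-\varepsilon)\diam(G_n).
\]
Together with $\dist(G_n,E)\ge\tau(C)\diam(G_n)$ from the original definition, these bounds imply
\[
\dist\!\Bigl(G_n,\,\bigcup_{i\in H_{\varepsilon,n}}G_i\cup E\Bigr)\ge(1-\varepsilon)\tau(C)\diam(G_n),
\]
and therefore $\tau_\varepsilon(C)\ge(1-\varepsilon)\tau(C)$. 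Combining with the upper bound and letting $\varepsilon\to 0$ completes the proof. The only place requiring genuine care is the second case above: the natural impulse is to bound $\dist(G_n,G_i)$ using the thickness anchored at $G_n$, but that is not available when $G_i$ is smaller than $G_n$ and indexed later; switching viewpoint and anchoring the bound at $G_i$ (via Lemma \ref{thm:another_thickness}) is the one nontrivial trick.
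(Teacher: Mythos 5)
Your proof is correct, and while part (i) coincides with the paper's argument (the inclusion $H_{\varepsilon_1,n}\subseteq H_{\varepsilon_2,n}$ and monotonicity of distance under enlarging the obstructing union), part (ii) takes a genuinely different and in fact more rigorous route. The paper's proof of (ii) only observes the set-theoretic fact that $H_{\varepsilon,n}$ decreases to $\Lambda_n$ as $\varepsilon\to 0$ and then asserts that $\tau_\varepsilon(C)\to\tau(C)$; this last step is not automatic, since a decreasing family of sets with intersection $\Lambda_n$ need not, in general, produce convergence of the associated distances, let alone allow one to interchange the limit with the outer infimum over $n$. Your quantitative sandwich $(1-\varepsilon)\tau(C)\le\tau_\varepsilon(C)\le\tau(C)$ avoids this gap entirely. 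The key move is exactly the one you flag: for $i\in H_{\varepsilon,n}\setminus\Lambda_n$ you cannot bound $\dist(G_n,G_i)$ by thickness anchored at $G_n$, but since $\diam(G_i)<\diam(G_n)$ you have $n\in\Lambda_i$, so Lemma~\ref{thm:another_thickness} applied at $G_i$ gives $\dist(G_n,G_i)\ge\tau(C)\diam(G_i)>(1-\varepsilon)\tau(C)\diam(G_n)$. You also correctly make explicit the bridge $\inf_{u_n\in\partial G_n}\dist(u_n,S)=\dist(G_n,S)$ needed to pass between the pointwise definition of $\tau_\varepsilon$ and the gap-to-gap distances in $\tau$, which the paper leaves implicit. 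The trade-off is that your argument is longer, but it yields a uniform rate $\tau(C)-\tau_\varepsilon(C)\le\varepsilon\,\tau(C)$ rather than bare convergence, which is strictly more information.
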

\begin{proof}
 For the first claim, if $(1-\varepsilon_1)\diam(G_n) < \diam(G_i)$ then $$(1-\varepsilon_2)\diam(G_n) < (1-\varepsilon_1)\diam(G_n) < \diam(G_i).$$ So if $i \in H_{\varepsilon_1 , n}$ then $i \in H_{\varepsilon_2 , n}$. Therefore, we include possibly more gaps into consideration with indices in the set $H_{\varepsilon_2, n}$ and thus, the distance to $G_n$ can only shrink. So $\tau_{\varepsilon_2}(C) \leq \tau_{\varepsilon_1}(C).$
 
 Now for the second claim, we can rewrite $\tau(C)$ as is done in Lemma \ref{thm:another_thickness}. Then note, in terms of $\limsup$ and $\liminf$ sets, we have $\lim_{m \rightarrow \infty} H_{1/m, n} = \Lambda_n$ and thus, as $\varepsilon \rightarrow 0$ we get $H_{\varepsilon ,n} \rightarrow \Lambda_n$ which shows $\tau_{\varepsilon}(C) \rightarrow \tau(C)$.
\end{proof}

The next two lemmas will tell us that the thickness of $C$ under $g_{x,t}$ either is not changed at all or only changes slightly when restricting our viewpoint to a small section of $C$. We break up the two lemmas by whether or not $C$ has any bounded gaps.
\begin{lemma}\label{thm:thickness_is_preserved_dimension_d}
Let $C \subset \R^d$ compact with $\tau(C) > 0$ and assume that $C$ has no bounded gaps. Then there exists a rectangle (hyperrectangle in $\R^d$) $R \subset C$ such that $\tau(g_{\widetilde{x},t}(R)) = \tau(C) $ where $\widetilde{x}$ is being taken from a neighborhood with center $x$.
\end{lemma}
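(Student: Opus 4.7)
The hypotheses force a degenerate form of $\tau$: since $C$ has no bounded gaps, Definition \ref{def:thickness} places us in the second case, where $\tau(C)$ is either $+\infty$ or $0$. The standing assumption $\tau(C)>0$ therefore gives $\tau(C)=+\infty$ and $C^{\circ}\neq\emptyset$. Consequently, to establish the lemma it suffices to produce a non-degenerate closed hyperrectangle $R\subset C^{\circ}$ together with an open neighborhood $U$ of $x$ so that, for every $\widetilde{x}\in U$, the image $g_{\widetilde{x},t}(R)$ is again a non-degenerate closed hyperrectangle (which automatically has non-empty interior, no bounded gaps, and hence $\tau=+\infty=\tau(C)$).

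The construction will exploit the coordinate-wise structure of $g_{\widetilde{x},t}$ set up in Section \ref{section:distance_function}: $g_{\widetilde{x},t}(y_1,\ldots,y_d)=(g^1(y_1),\ldots,g^d(y_d))$, and each $g^i$ is smooth and strictly monotone on either open half of its natural domain $(\widetilde{x}_i-t/\sqrt{d},\widetilde{x}_i+t/\sqrt{d})\setminus\{\widetilde{x}_i\}$, losing monotonicity (and blowing up in derivative) only at $y_i=\widetilde{x}_i$. Hence, if $R=\prod_{i=1}^d [a_i,b_i]$ has the property that, for every $i$, the interval $[a_i,b_i]$ lies strictly inside $(\widetilde{x}_i-t/\sqrt{d},\widetilde{x}_i+t/\sqrt{d})$ and strictly on one side of $\widetilde{x}_i$, then each $g^i$ is a homeomorphism on $[a_i,b_i]$ and $g_{\widetilde{x},t}(R)=\prod_{i=1}^d g^i([a_i,b_i])$ is a product of non-degenerate closed intervals, i.e.\ a non-degenerate hyperrectangle.

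The actual choice of $R$ is then routine. Using $C^{\circ}\neq\emptyset$, I would fix a small non-degenerate closed hyperrectangle $R\subset C^{\circ}$ whose $i$-th interval $[a_i,b_i]$ is strictly separated from $x_i$ and strictly contained in $(x_i-t/\sqrt{d},x_i+t/\sqrt{d})$ for every $i$; the pair $(x,t)$ is assumed in context to be such that this is possible, as otherwise no value of $\widetilde{x}$ near $x$ would make $g_{\widetilde{x},t}(R)$ even defined. Both constraints are open conditions in $\widetilde{x}$, so a standard continuity/compactness argument produces an open neighborhood $U$ of $x$ on which they persist for every $\widetilde{x}\in U$. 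The only real obstacle is this bookkeeping; once $R$ and $U$ are in place the conclusion is immediate from the previous paragraph, since for every $\widetilde{x}\in U$ the image $g_{\widetilde{x},t}(R)$ is a hyperrectangle and therefore has $\tau=+\infty=\tau(C)$.
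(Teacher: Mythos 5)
Your proposal is correct and follows essentially the same route as the paper: note that a set with no bounded gaps and positive thickness has $\tau=+\infty$ and non-empty interior, place a small closed hyperrectangle $R$ inside $C^\circ$, and observe that the coordinate-wise structure of $g_{\widetilde x,t}$ carries $R$ to another hyperrectangle, which then has $\tau=+\infty$. The one place you are more careful than the paper is in noting that each $g^i$ is strictly monotone only on each side of $\widetilde x_i$ (the paper simply says each $g^i$ is ``decreasing''), so $R$ must be chosen with each factor $[a_i,b_i]$ strictly to one side of $\widetilde x_i$ and inside the domain $(\widetilde x_i - t/\sqrt d,\,\widetilde x_i + t/\sqrt d)$; this is a genuine clarification that the paper's wording glosses over, and it is exactly what makes the open-neighborhood claim about $\widetilde x$ a clean continuity/openness statement rather than something deferred to the next lemma, as the paper does.
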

\begin{proof}
First note that since $g_{x,t}$ is continuous, the image of any compact set is also compact. So it is valid for us to talk about the thickness of the image of a compact set under $g_{x,t}$. Now because we are assuming $C$ has no bounded gaps, by definition this implies either $\tau(C) = 0$ or $\tau(C) = +\infty$ and since $\tau(C)>0$ we have that $\tau(C) = +\infty$ and that $C$ has non-empty interior. So there exists a ball $B \subset C$ and a (filled-in) rectangle $R \subset B \subset C$. Then recall that each $g^i$ only takes in one coordinate for their argument. So because of this and because each $g^i$ is decreasing and continuous, $g_{x,t}(R)$ is a filled-in rectangle and therefore,  $\tau(g_{x,t}(R)) = + \infty$. The same argument that will be given below in the proof of Lemma \ref{thm:thickness_nearly_preserved_dimension_d} also shows that we can have $\tau(g_{\widetilde{x},t}(R))  = + \infty$ where $\widetilde{x}$ is being taken from a neighborhood of $x$. Thus, in the case when $C$ has no bounded gaps, we simply restrict to this trivial case where thickness is in fact preserved.
\end{proof}

The calculations in the rest of this document will be more convenient if we can work with a specific type of boundary point.
\begin{definition}\label{def:right_endpoint}
Let $C \subset \R^d$ be compact with $H$ begin some gap of $C$. Then we call $u$  a \textbf{right endpoint} of $H$ provided that for any $\varepsilon >0$ we have $u + (\varepsilon, \cdots , \varepsilon) \notin H$.
\end{definition}

\begin{remark}
\normalfont
Intuitively, a right endpoint of a gap $H$ is a boundary point which sits at the "top right" of $H$. Clearly every gap will have a right endpoint.
\end{remark}

Now we state a key lemma which tells us how thickness of $C$ is affected under continuously differentiable mappings where $C$ has at least one bounded gap.  Furthermore, in the next lemma we will take $x=(x_1,\cdots , x_{2d}) \in \R^{2d}$ such that $x_1 = x_i$ for all $i \in \{1, \cdots, d\}$. But this is not a concern for us because by Lemma \ref{thm:nice_conditions}, we can always rotate and translate our sets so that we are in this case. This lemma is a higher dimensional version of Lemma 3.4 in \cite{mcdonald_taylor}. 
\begin{lemma}[Thickness is nearly preserved]\label{thm:thickness_nearly_preserved_dimension_d}
Let $C \subset \R^d$ compact with $\tau(C) > 0$ and let $u$ be a right endpoint of a bounded gap of $C$. Find $x \in \R^{2d}$ such that $x_1 = x_i$ for all $i \in \{1, \cdots ,d\}$ and for which $u$ is in the domain of $g_{x,t}$ and such that $\frac{dg^i}{du_i} \neq 0$ for all $i \in \{1, \cdots , d\}$. Then for every $\varepsilon > 0$, there exists $\delta > 0$ such that $$\tau \left( g_{\widetilde{x},t}(C \cap ([u_1, u_1 + \delta] \times \cdots  \times [u_d, u_d + \delta])) \right) > \tau_{2\varepsilon - \varepsilon^2}(C) (1-\varepsilon)$$ where $\widetilde{x}$ is being taken from a neighborhood with center $x$.
\end{lemma}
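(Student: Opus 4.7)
The plan is to exploit continuity of the Jacobian $g'_{x,t}$ to make $g_{\widetilde{x},t}$ behave like a nearly constant scaling on the small cube $R := [u_1, u_1 + \delta] \times \cdots \times [u_d, u_d + \delta]$. Because the hypothesis $\frac{dg^i}{du_i} \neq 0$ together with continuity of the partial derivatives in $(z, \widetilde{x})$ guarantees that each $\left|\frac{dg^i}{dz_i}\right|$ stays bounded both above and below uniformly near $u$ for $\widetilde{x}$ near $x$, I would first fix $\delta > 0$, a neighborhood of $x$, and constants $0 < \alpha \leq \beta$ such that $\left|\frac{dg^i}{dz_i}\right| \in [\alpha, \beta]$ for every $i$, every $z \in R$, and every $\widetilde{x}$ in the neighborhood, with $\alpha/\beta \geq 1-\varepsilon$.

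Combining this with Lemma \ref{thm:MVT} and the diagonal structure of $g'_{\widetilde{x},t}$ recorded in Section \ref{section:distance_function} produces the bi-Lipschitz estimate $\alpha |a - b| \leq |g_{\widetilde{x},t}(a) - g_{\widetilde{x},t}(b)| \leq \beta |a - b|$ on $R$, obtained by writing $|g_{\widetilde{x},t}(a) - g_{\widetilde{x},t}(b)|^2 = \sum_i \left(\frac{dg^i}{dz_i}(z_i)\right)^2 (a_i - b_i)^2$ for suitable $z_i$ between $a_i$ and $b_i$. With Fact \ref{thm:useful_fact} and the homeomorphism property of $g_{\widetilde{x},t}$, this yields $\alpha \diam(G) \leq \diam(g_{\widetilde{x},t}(G)) \leq \beta \diam(G)$ for any $G \subset R$ and $\dist(g_{\widetilde{x},t}(A), g_{\widetilde{x},t}(B)) \geq \alpha \dist(A, B)$ for $A, B \subset R$. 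Writing $C'' := g_{\widetilde{x},t}(C \cap R)$, the bounded gaps of $C''$ correspond via $g_{\widetilde{x},t}$ to the bounded gaps of $C$ contained in $\Int(R)$, while any gap of $C$ meeting $\partial R$ is absorbed into the unbounded gap $E''$. For a bounded gap $g_{\widetilde{x},t}(G_n)$ of $C''$, any competitor $g_{\widetilde{x},t}(G_i)$ with $\diam(g_{\widetilde{x},t}(G_i)) \geq \diam(g_{\widetilde{x},t}(G_n))$ must obey $\beta \diam(G_i) \geq \alpha \diam(G_n)$, so its index lies in $H_{\varepsilon,n} \subset H_{2\varepsilon - \varepsilon^2,n}$. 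Combining these estimates with a suitable boundary point $u_n \in \partial G_n$ yields
\[
\frac{\dist\bigl(g_{\widetilde{x},t}(G_n), \bigcup_{i \in \Lambda'_n} g_{\widetilde{x},t}(G_i) \cup E''\bigr)}{\diam(g_{\widetilde{x},t}(G_n))} \geq \frac{\alpha}{\beta} \cdot \frac{\dist\bigl(u_n, \bigcup_{i \in H_{2\varepsilon - \varepsilon^2,n}} G_i \cup E\bigr)}{\diam(G_n)} \geq (1-\varepsilon)\, \tau_{2\varepsilon - \varepsilon^2}(C, u_n),
\]
and infimizing over $n$ and $u_n$ produces the target bound $(1-\varepsilon)\tau_{2\varepsilon - \varepsilon^2}(C)$; the extra slack in $H_{2\varepsilon - \varepsilon^2,n}$ rather than $H_{\varepsilon,n}$ conservatively absorbs the mismatch between the original diameter ranking and the ranking after rescaling.

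The hard part will be handling the distance from $g_{\widetilde{x},t}(G_n)$ to the unbounded gap $E''$, since the restriction to $R$ introduces new boundary contributions to $E''$ coming from $\partial R$. I would argue that each point of $E''$ within striking distance of $g_{\widetilde{x},t}(G_n)$ is the image of a point of $E$, of a gap of $C$ meeting $\partial R$, or of an interior point of $R \setminus C$ connected to $E$; in each case the scaling bound propagates and relates this distance to $\dist(u_n, \bigcup_i G_i \cup E)$. The hypothesis that $u$ is a right endpoint of a bounded gap (so the gap on whose boundary $u$ sits lies in $R^c$ except at $u$), together with the freedom to shrink $\delta$ further, will be crucial to ensure no spurious small distance arises from $\partial R$ so that the scaling estimate above passes cleanly to the unbounded gap term as well.
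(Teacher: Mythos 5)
Your proposal follows essentially the same strategy as the paper's proof: shrink $\delta$ so that by continuity the diagonal entries of $g'_{\widetilde x,t}$ agree to within a factor of $1-\varepsilon$ on the cube $R$; apply the componentwise mean value theorem (Lemma \ref{thm:MVT}) to compare image and pre-image distances and diameters; and transfer the image gap ordering into the $(2\varepsilon-\varepsilon^2)$-tolerance ordering $H_{2\varepsilon-\varepsilon^2,n}$ of the original gaps. Your $\alpha,\beta$ bi-Lipschitz packaging is a cleaner presentation of the paper's coordinatewise comparison of $\min_i|dg^i/dz_i|$ against $\max_i|dg^i/dz_i|=\|g'_{x,t}\|_{\mathrm{op}}$, but the mechanism is the same.

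The subtlety you flag at the end is genuine and is also left implicit in the paper. The paper passes from $\|v_n-v_j\|$ to $\dist\bigl(v_n,\bigcup_i G_i\cup E\bigr)$, treating the unbounded gap of $A_1=g_{x,t}(C\cap R)$ as $g_{x,t}(E)$; this is valid only when the nearest competitor point $v_j$ lies on the boundary of a bounded gap of $C$ or of the original $E$. Since the unbounded gap $E''$ of $g_{\widetilde x,t}(C\cap R)$ also contains the complement of $g_{\widetilde x,t}(R)$, the distance-achieving point could instead be $g_{\widetilde x,t}(v_j)$ with $v_j\in C\cap\partial R$, giving a small distance to $\partial R$ that need not be bounded below by $\dist\bigl(v_n,\bigcup_i G_i\cup E\bigr)$; indeed a $\delta$ chosen only by continuity can cut a bounded gap of $C$ arbitrarily close to $\partial R$ and so make $\tau(C\cap R)$ smaller than $\tau(C)$. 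You are right that the right-endpoint hypothesis and the freedom to shrink $\delta$ are what should rule this out, but as written neither your proposal nor the paper carries out that step; it requires a choice of $\delta$ adapted to the gap structure of $C$, not merely to the modulus of continuity of the derivatives.
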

\begin{proof}
Let $\varepsilon > 0$ be given. Note that since $x_1 = x_i$ for $i \in \{1, \cdots , d\}$, this means $$\frac{dg^i}{dz_i} = \frac{x_1 - z_i}{\sqrt{\frac{1}{d}t^2 - (x_1-z_i)^2}}. $$ Define $f : \R \rightarrow \R$ by $f(z) = \frac{x_1 - z}{\sqrt{\frac{1}{d}t^2 - (x_1-z)^2}}$. Then $f$ is continuous and for $z \in \R^d$ we have $f(z_i) = d g^i /dz_i$ for $i \in \{1, \cdots ,d\}$. Furthermore, since each $dg^i/du_i$ is nonzero, $f$ will also be nonzero in a neighborhood of $u$. With this and because $f$ is continuous, there exists $\widetilde{\delta} > 0$ such that for any $z \in [u_1, u_1 + \delta] \times \cdots \times [u_d, u_d + \delta]$ where $\delta = \widetilde{\delta}/\sqrt{d}$, we have \begin{equation}\label{eq:general_inequality_1}\left| \frac{|dg^i / dz_i|}{|d g^j / dz_j|} - 1 \right|=  \left| \frac{ |f(z_i)|}{|f(z_j)|} - 1 \right| < \varepsilon \end{equation} for any $i,j \in \{1, \cdots, d\}$ since this means $|z_i - z_j| \leq \sqrt{d} \cdot \delta = \widetilde{\delta}$ which invokes continuity of $f$. We can also do this for a neighborhood of $x$ values. Indeed, take $z \in \left[u_1 + \frac{\delta}{2d}, u_1 + \frac{(2d-1)\delta}{2d}\right] \times \cdots \times \left[u_d + \frac{\delta}{2d}, u_d + \frac{(2d-1)\delta}{2d}\right]$ and $\alpha_i \in \left[ -\frac{\delta}{2d}, \frac{\delta}{2d}\right]$ for $i \in \{1, \cdots , d\}$. Then by setting $\widetilde{x} = (x_1 + \alpha_1, \cdots , x_d + \alpha_d)$ we will have that $$ \frac{\widetilde{x}_i - z_i}{\sqrt{\frac{1}{d}t^2 - (\widetilde{x}_i - z_i)^2}} = \frac{x_1 - (z_i - \alpha_i)}{\sqrt{\frac{1}{d}t^2 - (x_1 - (z_i - \alpha_i))^2}} = \frac{dg^i}{d(z_i - \alpha_i)} $$ and $z- (\alpha_1 , \cdots , \alpha_d) \in [u_1, u_1 + \delta]\times \cdots \times [u_d, u_d + \delta]$. So again if we take $z \in \left[u_1 + \frac{\delta}{2d}, u_1 + \frac{(2d-1)\delta}{2d}\right] \times \cdots \times \left[u_d + \frac{\delta}{2d}, u_d + \frac{(2d-1)\delta}{2d}\right]$ and allowing $\widetilde{x}$ to have the form $(x_1 + \alpha_1, \cdots , x_d + \alpha_d)$, this will still invoke continuity of $f$ so that the ratio $\frac{|dg^i/d(z_i-\alpha_i)|}{|dg^j/d(z_j-\alpha_j)|}$ is still close to $1$ as this is functionally the same as allowing $z \in [u_1, u_1 + \delta] \times \cdots \times [u_d, u_d + \delta]$, but now this allows us to let the $x$-values we are observing to be taken from an entire neighborhood. Explicitly, the neighborhood of $x$-values is the set $x + \left( \left[-\frac{\delta}{2d}, \frac{\delta}{2d} \right] \times \cdots \times \left[-\frac{\delta}{2d}, \frac{\delta}{2d} \right] \right)$ which also shows that the center of this neighborhood is $x$. From here on out, we will abuse notation and use the term $dg^i / dz_i$ when we really mean $dg^i / d(z_i - \alpha_i)$ and we will use $g_{x,t}$ when we really mean $g_{x+\alpha,t}$ for $\alpha_i \in \left[-\frac{\delta}{2d}, \frac{\delta}{2d} \right]$ and $\alpha = (\alpha_1 , \cdots , \alpha_d)$. 

Before going further, we make a quick note about $f$ being well-defined. Since we are assuming $u$ is in the domain of $g_{x,t}$, we have that each $g^i(u_i) = x_{i+d} + \sqrt{\frac{1}{d}t^2 - (x_i - u_i)^2}$ is well-defined. So then clearly we can find an interval $I$ such that for any $\eta_i \in I_i$, the function $g^i(u_i + \eta_i)$ is also well-defined. Thus, $g_{x,t}$ is well-defined for an entire neighborhood, say $N$, for which $u \in N$. This further implies that each $u_i+\eta_i$ is also in the domain of $f$ where $\eta_i \in I_i$. Here, we are assuming without loss of generality that $N$ intersects the set $[u_1, u_1 + \delta] \times \cdots \times [u_d, u_d +\delta]$.

Resuming, note that any gap in  $A_1 := g_{x,t}(C \cap ([u_1, u_1 + \delta] \times \cdots \times [u_d, u_d + \delta]))$ is the image of a gap in $A_2 := C \cap ([u_1, u_1 + \delta] \times \cdots \times [u_d, u_d + \delta])$. In particular, all gaps in $A_1$ will be of the form $g_{x,t}(G_n)$ where $G_n$ is a gap in $A_2$. After possibly reordering the original gaps, we then produce a new list of gaps of $A_1$, say $\{g_{x,t}(G_n)\}_{n=1}^{\infty}$, with $\diam(g_{x,t}(G_n)) \geq \diam(g_{x,t}(G_{n+1}))$. By Fact \ref{thm:useful_fact}, $\diam(g_{x,t}(G_n)) = \diam(\partial g_{x,t}(G_n))= \diam(g_{x,t}(\partial G_n))$. Thus, for $\varepsilon$-thickness we can take $u_n \in \partial G_n$, for valid $G_n$, and have $$\tau_{\varepsilon}(A_1, g_{x,t}(u_n)) = \frac{\dist(g_{x,t}(u_n), \bigcup_{i \in H_{\varepsilon,n}} g_{x,t}(G_i) \cup E)}{\diam(g_{x,t}(G_n))}$$ where $E$ now denotes the unbounded gap of $A_1$ and $H_{\varepsilon,n} = \{i \neq n : (1-\varepsilon) \diam(g_{x,t}(G_n)) < \diam(g_{x,t}(G_i)) \}$. Now let $v_n \in \partial G_n$ such that $g_{x,t}(G_n)$ is a gap in $A_1$. Note that since we are assuming that we have thick compact sets, then the thickness of neither is equal to zero. So we always get nonzero values in the numerator for the expression of $\tau_{\varepsilon}(A_1, g(v_n))$. So we can find some element, say $v_j \in \partial G_j$ where $G_j$ could also be the unbounded gap of $A_1$ such that $(1-\varepsilon)\diam(g_{x,t}(G_n)) < \diam(g_{x,t}(G_j))$ and $\dist(g_{x,t}(v_n), \bigcup_{i \in H_{\varepsilon,n}} g_{x,t}(G_i) \cup E) = \| g_{x,t}(v_n) - g_{x,t}(v_j) \|$. Set $A_3:=[u_1, u_1 + \delta] \times \cdots \times [u_d, u_d + \delta]$. By Lemma \ref{thm:MVT}, there exists $z \in A_3$ such that $\| g_{x,t}(v_n) - g_{x,t}(v_j)\| = \| g_{x,t}^{'}(z) (v_n - v_j) \|$. Furthermore, $\diam(g_{x,t}(G_n)) = \| g_{x,t}(a_n) - g_{x,t}(b_n)\|$ for some $a_n, b_n \in \partial G_n$. Again by Lemma \ref{thm:MVT} we have that there exists some $w \in A_3$ such that $\| g_{x,t}(a_n) - g_{x,t}(b_n)\| = \| g_{x,t}^{'}(w)(a_n - b_n)\|.$ This gives \begin{align*}
\tau_{\varepsilon}(A_1, g_{x,t}(v_n)) &= \frac{\| g_{x,t}^{'}(z) (v_n - v_j)\|}{\diam(g_{x,t}(G_n))} \\ &= \frac{\|g_{x,t}^{'}(z) (v_n - v_j)\|}{\|g_{x,t}^{'}(w) (a_n - b_n)\|} \\ &\geq \frac{\| g_{x,t}^{'}(z) (v_n - v_j)\|}{ \|g_{x,t}^{'}(w)\|_{op}\| a_n - b_n \|}.
\end{align*}
To get a further lower estimate on this quantity, we can assume without loss of generality that $\left| \frac{dg^2}{dz_2} \right| = \min \left\{ \left| \frac{dg^1}{dz_1} \right|, \cdots , \left| \frac{dg^d}{dz_d} \right| \right\} $. Then \begin{align*}
\| g_{x,t}^{'}(z)(v_n - v_j) \| &= 
\left\| 
\begin{bmatrix}
\frac{dg^1}{dz_1} & 0 & \cdots & 0 \\ \vdots & \ddots & & \vdots \\
0 & 0 & \cdots & \frac{dg^d}{dz_d}
\end{bmatrix}
\begin{bmatrix}
v_{n,1} - v_{j,1} \\ \vdots \\
v_{n,d} - v_{j,d}
\end{bmatrix}
\right\| \\ &= \sqrt{ \left( \frac{dg^1}{dz_1}\right)^2 \left( v_{n,1} - v_{j,1} \right)^2 + \cdots + \left( \frac{dg^d}{dz_d}\right)^2 \left( v_{n,d} - v_{j,d} \right)^2} \\ &\geq \sqrt{ \left( \frac{dg^2}{dz_2}\right)^2 \left( v_{n,1} - v_{j,1} \right)^2 + \cdots + \left( \frac{dg^2}{dz_2}\right)^2 \left( v_{n,d} - v_{j,d} \right)^2} \\ &= \left| \frac{dg^2}{dz_2} \right| \| v_n - v_j\|.
\end{align*}
Then \begin{align*}
\tau_{\varepsilon}(A_1, g_{x,t}(v_n)) &\geq \frac{ \left| \frac{dg^2}{dz_2} \right| \| v_n - v_j\|}{\| g_{x,t}^{'}(w)\|_{op} \| a_n - b_n \|} \\ &\geq \frac{ \left| \frac{dg^2}{dz_2} \right| \| v_n - v_j\|}{\| g_{x,t}^{'}(w)\|_{op} \diam(G_n)}.
\end{align*}
Using Equation \eqref{eq:op_norm}, without loss of generality we can assume $\| g_{x,t}^{'}(w)\|_{op} = \left| \frac{dg^1}{dw_1}\right|.$ By Equation \eqref{eq:general_inequality_1} we have $$\frac{\left| \frac{dg^2}{dz_2} \right|}{\left| \frac{dg^1}{dw_1} \right|} > 1 - \varepsilon$$ which yields \begin{align*}
\tau_{\varepsilon}(A_1, g_{x,t}(v_n)) &\geq \frac{ \left| \frac{dg^2}{dz_2} \right| \| v_n - v_j\|}{\left| \frac{dg^1}{dw_1}\right| \diam(G_n)} \\ &> (1-\varepsilon) \frac{ \| v_n - v_j\|}{\diam(G_n)}.
\end{align*}
Note that $v_n$ and $v_j$ were chosen such that $\| g_{x,t}(v_n) - g_{x,t}(v_j)\| = \dist\left( g_{x,t}(v_n) , \bigcup_{i \in H_{\varepsilon,n}} g_{x,t}(G_i) \cup g_{x,t}(E) \right)$ and recall that since $j \in H_{\varepsilon,n}$ we have $(1-\varepsilon) \diam (g_{x,t}(G_n)) < \diam(g_{x,t}(G_j))$. Note that $j$ being in $H_{\varepsilon,n}$ refers to the ordering that was placed on $\{g_{x,t}(G_n)\}_{n=1}^{\infty}$, not necessarily the ordering that we placed on $\{G_n\}_{n=1}^{\infty}$. We will now try to find $\eta > 0$ such that $j \in H_{\eta, n}$ where this now refers to the ordering that we placed on $\{G_n\}_{n=1}^{\infty}$ which will provide a lower estimate on $\| v_n - v_j\|$. Let $\alpha^1 ,\beta^1 \in \partial G_n$ such that $\diam(G_n) = \| \alpha^1 - \beta^1 \|$ and $\alpha^2 , \beta^2 \in \partial G_j$ such that $\diam(g_{x,t}(G_j)) = \| g_{x,t}(\alpha^2) - g_{x,t}(\beta^2) \|$. Then $$(1-\varepsilon) \| g_{x,t}(\alpha^1) - g_{x,t}(\beta^1) \| \leq (1-\varepsilon) \| g_{x,t}(a_n) - g_{x,t}(b_n) \| <  \| g_{x,t}(\alpha^2) - g_{x,t}(\beta^2) \| .$$ As before, we can find $\rho^1, \rho^2 \in A_3$ such that $\| g_{x,t}(\alpha^1) - g_{x,t}(\beta^1) \| = \| g_{x,t}^{'}(\rho^1)(\alpha^1 - \beta^1)\|$ and $\| g_{x,t}(\alpha^2) - g_{x,t}(\beta^2) \| = \| g_{x,t}^{'}(\rho^2)(\alpha^2 - \beta^2)\|$. By a similar calculations as above, we can assume without loss of generality that $$\| g_{x,t}^{'}(\rho^1)(\alpha^1 - \beta^1)\| \geq \left| \frac{dg^1}{d\rho_1^1}\right| \|\alpha^1 - \beta^1\|$$ and $$\| g_{x,t}^{'}(\rho^2)(\alpha^2 - \beta^2)\| \leq \| g_{x,t}^{'} (\rho^2)\|_{op} \|\alpha^2 - \beta^2\| = \left| \frac{dg^2}{d\rho_2^2} \right| \|\alpha^2 - \beta^2\|.$$ Along with Equation \eqref{eq:general_inequality_1} this implies \begin{align*}
\diam(G_j) &\geq \| \alpha^2 - \beta^2\| \\ &> (1-\varepsilon) \frac{ \left| \frac{dg^1}{d\rho_1^1} \right|}{\left| \frac{dg^2}{d\rho_2^2} \right|} \| \alpha^1 - \beta^1\| \\ &> (1-\varepsilon)(1-\varepsilon) \| \alpha^1 - \beta^1\| \\ &= (1-(2\varepsilon - \varepsilon^2)) \diam(G_n).
\end{align*}
Therefore, $j \in H_{2\varepsilon - \varepsilon^2, n}$ where this refers to the ordering placed on $\{ G_n\}_{n=1}^{\infty}$. So \begin{align*}
\tau_{\varepsilon}(A_1, g_{x,t}(v_n)) &\geq (1-\varepsilon) \frac{ \| v_n - v_j\|}{\diam(G_n)} \\ &\geq (1-\varepsilon) \frac{ \dist\left( v_n, \bigcup_{i \in H_{2\varepsilon - \varepsilon^2, n}} G_i \cup E \right)}{\diam(G_n)} \\ &= (1-\varepsilon) \tau_{2\varepsilon - \varepsilon^2}(C , v_n).
\end{align*}
Taking infimums produces $$ \tau(A_1) \geq \tau_{\varepsilon} (A_1) > \tau_{2\varepsilon - \varepsilon^2} (C) (1-\varepsilon)$$ which finishes the proof.
\end{proof}
This lemma ensures that we can find a lower bound on $\tau(g_{x,t}(C))$ in terms of $C$. Thus, even though it can be difficult to see how $C$, globally, is affected by $g_{x,t}$, we can restrict $C$ in a local manner and use the above lemma to ensure that the thickness of $C$ does not change much under $g_{x,t}$.

In the proof of Theorem \ref{thm:general}, we will require our pinned points to not share a coordinate and Lemma \ref{thm:different_coordinates} ensures that we can do this. The proof relies on the fact that unions of hyperplanes have zero thickness, which we now present.

\begin{fact}\label{thm:hyperplanes_zero_thickness}
Let $C \subset \R^d$ be compact such that $C \subseteq \bigcup_{i=1}^n J_i$ where each $J_i$ is a bounded hyperplane. Then $\tau(C) = 0$.
\end{fact}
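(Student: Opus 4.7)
The plan is to reduce to the case where $C$ has bounded gaps and show that every bounded gap has another component of $C^C$ at distance zero from it, whence the alternative form of thickness in Lemma~\ref{thm:another_thickness} forces $\tau(C) = 0$.

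First, each affine hyperplane in $\R^d$ has empty interior, so the finite union $\bigcup_{i=1}^n J_i$ has empty interior, and hence $C^\circ = \emptyset$. If $C$ has no bounded gaps, Definition~\ref{def:thickness} immediately gives $\tau(C) = 0$.

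Suppose now $C$ admits a bounded gap $G$. Since $\overline{G}$ is compact, the open set $\R^d \setminus \overline{G}$ has a unique unbounded connected component $E'$, which is nonempty, and its topological boundary satisfies $\partial E' \subseteq \partial \overline{G} = \partial G \subseteq C$, with $\partial E' \neq \emptyset$. Choose $p \in \partial E'$: every ball about $p$ meets both $G$ (since $p \in \partial G$) and $E'$ (since $p \in \partial E'$). Because $C^\circ = \emptyset$ makes $C^C$ dense in $\R^d$, the set $C^C \cap E'$ is dense in $E'$, so one can pick $q \in C^C \cap E'$ arbitrarily close to $p$. Such $q$ lies outside $\overline{G}$, hence in $C^C \setminus G$, so
\[
\dist(G, C^C \setminus G) = 0
\]
for every bounded gap $G$ of $C$.

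To conclude, observe that the affine hulls of the $J_i$ form a finite arrangement in $\R^d$ with only finitely many open cells, and so $C$ has only finitely many bounded gaps $G_1, \ldots, G_N$, which we label in order of non-increasing diameter. For the smallest gap $G_N$, the index set $\Lambda_N$ from Lemma~\ref{thm:another_thickness} contains every other bounded-gap index, so $\bigcup_{i \in \Lambda_N} G_i \cup E = C^C \setminus G_N$. The key identity above then gives this distance as $0$, whence the ratio at $n=N$ in the alternative thickness formula vanishes and $\tau(C) = 0$. The main obstacle is the finiteness step for the bounded gaps: it requires a sensible reading of ``bounded hyperplane'' (e.g.\ as a bounded closed polytopal or convex piece of an affine hyperplane) so that the hyperplane-arrangement count applies, ruling out pathological bounded subsets of hyperplanes that could otherwise produce infinitely many gaps and force a more delicate descent argument.
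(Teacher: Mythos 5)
Your approach differs from the paper's. The paper sets $C_1 := C \cap J_1$, observes that a compact subset of a single affine hyperplane has connected complement in $\R^d$ (so $C_1$ has no bounded gaps) and empty interior, concludes $\tau(C_1) = 0$ directly from Definition~\ref{def:thickness}, and then invokes the comparison $\tau(C) \leq \tau(C_1)$. You instead analyze the gaps of $C$ itself. Your density argument, showing that every bounded gap $G$ satisfies $\dist(G, C^C \setminus G) = 0$, is correct; one small point is that for a general open set $G$ one only has $\partial\overline{G} \subseteq \partial G$ rather than equality, but the inclusion is all you use.

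The one genuine gap is the finiteness of the bounded gaps of $C$, which you flag but do not prove, and which your final appeal to Lemma~\ref{thm:another_thickness} at a smallest-diameter gap really does require. The claim is in fact true, and it does not depend on any particular reading of ``bounded hyperplane,'' so the concern you raise about pathological bounded subsets cannot arise. Let $L_1, \ldots, L_n$ be the affine hyperplanes containing $J_1, \ldots, J_n$. The arrangement $\{L_i\}$ partitions $\R^d \setminus \bigcup_i L_i$ into finitely many open cells. Since $\bigcup_i L_i$ has empty interior, any bounded component $G$ of $C^C$ contains a point off $\bigcup_i L_i$; as each cell is a connected subset of $\bigl(\bigcup_i L_i\bigr)^C \subseteq C^C$ and $G$ is a connected component of $C^C$, it follows that $G$ contains an entire cell, and that cell must be bounded, else $G$ would be unbounded. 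Distinct components of $C^C$ are disjoint and therefore contain distinct bounded cells, and there are only finitely many cells, so only finitely many bounded gaps. This argument is insensitive to which bounded subsets $J_i \subseteq L_i$ are actually chosen. With finiteness supplied, your use of Lemma~\ref{thm:another_thickness} at the smallest-diameter gap $G_N$ does yield $\bigcup_{i\in\Lambda_N} G_i \cup E = C^C\setminus G_N$ and hence $\tau(C)=0$, completing the proof.
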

\begin{proof}
Let $C_1$ denote the part of $C$ that is a subset of $J_1$, i.e. $C_1 = C \cap J_1$. Then since $C_1$ can only be removing bounded gaps of $C$ we will have $\tau(C) \leq \tau(C_1)$. Furthermore, the only path connected component of $C_1^C$ is its unbounded gap and because hyperplanes have empty interior, this implies $C_1$ has empty interior as well. By the definition of thickness, this yields $\tau(C_1) = 0$ showing the claim.
\end{proof}

\begin{lemma}\label{thm:different_coordinates}
Assume $ \tau(C_1), \tau(C_2) > 0$ where $C_1, C_2 \subset \R^d$. Then given any $x^1 \in C_1 \times C_2$ we can always find $x^2, \cdots , x^{k+1} \in C_1 \times C_2$ such that every element in the set $\{x_i^1, x_i^2, \cdots , x_i^{n}\}$ does not share a coordinate for all $ i \in \{1, \cdots , 2d \}$. Thus, in particular we have $x_i^j \neq x_i^k$ for all $i \in \{1, \cdots , 2d\}$ and $j,k \in \{ 1, \cdots , n\}$, i.e. none of $x^1, \cdots , x^{k+1}$ share a coordiante.
\end{lemma}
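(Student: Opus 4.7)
The plan is to build the points one at a time by induction on $n$, using the fact that the set of ``forbidden'' locations at each step is contained in a finite union of hyperplanes, so cannot exhaust a thick compact set.

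For the base case $n=1$ there is nothing to check since a single point $x^1$ is given. For the inductive step, suppose we have already chosen $x^1,\dots,x^j \in C_1\times C_2$ satisfying the coordinate-distinctness condition, and we want to produce $x^{j+1}$. Write each previously chosen point as $x^l=(a^l,b^l)$ with $a^l\in C_1$, $b^l\in C_2$, and define the sets
\begin{equation*}
A \;:=\; C_1 \setminus \bigcup_{i=1}^{d}\bigcup_{l=1}^{j}\bigl\{a\in C_1 : a_i = a_i^l\bigr\}, \qquad
B \;:=\; C_2 \setminus \bigcup_{i=1}^{d}\bigcup_{l=1}^{j}\bigl\{b\in C_2 : b_i = b_i^l\bigr\}.
\end{equation*}
Any point $x^{j+1}=(a,b)$ with $a\in A$ and $b\in B$ automatically satisfies $x_i^{j+1}\neq x_i^l$ for every $i\in\{1,\dots,2d\}$ and every $l\leq j$, since the first $d$ coordinates come from $a\in A$ (avoiding the conflicts with $a^l$) and the last $d$ coordinates come from $b\in B$ (avoiding the conflicts with $b^l$). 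So the task reduces to showing $A\neq\emptyset$ and $B\neq\emptyset$.

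This is where Fact \ref{thm:hyperplanes_zero_thickness} does the work. If $A$ were empty, then $C_1$ would be contained in the finite union of coordinate hyperplanes $\bigcup_{i,l}\{a\in\R^d : a_i = a_i^l\}$; intersecting with a large ball containing the compact set $C_1$ produces bounded hyperplanes covering $C_1$, so the fact would force $\tau(C_1)=0$, contradicting the hypothesis $\tau(C_1)>0$. The identical argument applied to $C_2$ shows $B\neq\emptyset$. Pick any $a\in A$ and $b\in B$ and set $x^{j+1}=(a,b)$, completing the induction. The only conceptually subtle step is the passage from unbounded hyperplanes $\{a_i=c\}$ to the bounded hyperplanes required by Fact \ref{thm:hyperplanes_zero_thickness}, and this is immediate from compactness of $C_1$ and $C_2$.
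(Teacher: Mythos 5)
Your proof is correct and uses the same key ingredient as the paper, namely Fact \ref{thm:hyperplanes_zero_thickness} applied separately to $C_1$ and $C_2$ to rule out their being covered by finitely many coordinate hyperplanes. Your organization is slightly cleaner than the paper's: you work directly in each factor by defining the "good" sets $A\subset C_1$ and $B\subset C_2$ and taking a product, whereas the paper first phrases the contradiction in terms of hyperplanes in $\R^{2d}$ covering $C_1\times C_2$ before reducing to the same factor-wise application of the Fact, but the underlying argument is the same.
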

\begin{proof}
We prove this by induction. Since we are always given $x^1$, our induction starts at $n=2$. Thus, we will find $x^2 \in C_1 \times C_2$ so that each $x_i^1 \neq x_i^2$ for $i \in \{1, \cdots , 2d\}$. Assume for contradiction that we cannot do this. Define $$H_i := \{ (x_1^1 + \alpha_1, \cdots , x_{i-1}^1 + \alpha_{i-1}, x_i^1, x_{i+1}^1 + \alpha_{i+1}, \cdots , x_{2d}^1 + \alpha_{2d}): - \diam(C_1 \times C_2) \leq \alpha_j \leq \diam(C_1 \times C_2 \}$$ for all $i \in \{1, \cdots , 2d\}$, i.e. $H_i$ is a bounded hyperplane with one of the coordinates fixed. Note that our contradiction assumption means that for all $x \in C_1 \times C_2$, we have that $x$ and $x^1$ must share some coordinate. Therefore, $x \in H_i$ for some $i$ which further implies that $C_1 \times C_2 \subseteq \bigcup_{1 \leq k \leq 2d} H_k$. But if there exists $\alpha_1, \cdots, \alpha_{2d} \in \R \backslash \{ 0\}$ such that $(x_1^1 + \alpha_1 , \cdots , x_d^1 + \alpha_d) \in C_1$ and $(x_{d+1}^1 + \alpha_{d+1}, \cdots x_{2d}^1 + \alpha_{2d}) \in C_2$ then $(x_1^1 + \alpha_1, \cdots , x_{2d}^1 + \alpha_{2d}) \in C_1 \times C_2$ and $(x_1^1 + \alpha_1, \cdots , x_{2d}^1 + \alpha_{2d}) \notin \bigcup_{1 \leq k \leq 2d} H_k$ which would give us the desired contradiction. Clearly we can do this. Indeed, if we couldn't then by defining $$J_i := \{ (x_1^1 + \beta_1, \cdots , x_{i-1}^1+\beta_{i-1}, x_i^1, x_{i+1}^{1} \beta_{i+1}, \cdots , x_d^1 +\beta_d): -\diam(C_1) \leq \beta_j \leq \diam(C_1) \}  $$ we would have $C_1 \subseteq \bigcup_{1 \leq k \leq d} J_k$ and that each $J_k$ is a bounded hyperplane. So by Fact \ref{thm:hyperplanes_zero_thickness} this means $\tau(C_1) = 0$ which gives us a contradiction. We similarly come to the same conclusion for $C_2$. This proves the base case. 

Now assume the claim is true for all $n \leq k$ and consider the $k+1$ case. Again assume for contradiction that we cannot find $x^{k+1} \in C_1 \times C_2$ such that $x_i^{k+1} \neq x_i^j$ for $i \in \{1, \cdots, 2d\}$ and $j\in \{1, \cdots , k \}$. We proceed similarly as in the base case. Since the points $x^2, \cdots, x^k$ are generated in terms of $x^1$, we can write $x^j = (x_1^1 + \eta_1^j, \cdots , x_{2d}^j + \eta_{2d}^j)$ for $-\diam (C_1 \times C_2) \leq \eta_i^j \leq \diam(C_1 \times C_2).$ Then define \begin{align*} H_i^j := \left\{ \begin{aligned}
(x_1^1 + \eta_1^j + \alpha_1^j &, \cdots , x_{i-1}^1 + \eta_{i-1}^j + \alpha_{i-1}^j, x_i^1 + \eta_i, x_{i+1}^1 + \eta_{i+1}^j + \alpha_{i+1}^j, \cdots ,x_{2d}^1 + \eta_{2d}^j + \alpha_{2d}^j \\ &: -\diam (C_1 \times C_2) \leq \alpha_i^j \leq \diam(C_1 \times C_2) \end{aligned}
\right\}. \end{align*} Again by our contradiction assumption we have $C_1 \times C_2 \subseteq \bigcup_{1 \leq j \leq k} \bigcup_{1 \leq i \leq 2d} H_i^j$. Similarly, if there exists $\alpha_1, \cdots , \alpha_{2d} \in \R \backslash \{0 \}$ and some $j$ for which $(x_1^1 + \eta_1^j + \alpha_1, \cdots , x_d^1 + \eta_d^j + \alpha_d) \in C_1$ and $(x_{d+1}^1 +\eta_{d+1}^j + \alpha_{d+1}, \cdots, x_{2d}^1 + \eta_{2d}^j + \alpha_{2d}) \in C_2$ then $(x_1^1 + \eta_1^j + \alpha_1, \cdots , x_{2d}^1 + \eta_{2d}^j + \alpha_{2d}) \in C_1 \times C_2$ and $(x_1^1 + \eta_1^j + \alpha_1, \cdots , x_{2d}^1 + \eta_{2d}^j + \alpha_{2d}) \notin \bigcup_{1 \leq j \leq k} \bigcup_{1 \leq i \leq 2d} H_i^j$ which would give us the desired conclusion. If we couldn't do this, then for all $j$, by defining 
\begin{align*} J_i^j := \left\{ \begin{aligned}
(x_1^1 + \eta_1^j + \beta_1^j &, \cdots , x_{i-1}^1 + \eta_{i-1}^j + \beta_{i-1}^j, x_i^1 + \eta_i, x_{i+1}^1 + \eta_{i+1}^j + \beta_{i+1}^j, \cdots ,x_{2d}^1 + \eta_{2d}^j + \beta_{2d}^j \\ &: -\diam (C_1 \times C_2) \leq \beta_i^j \leq \diam(C_1 \times C_2) \end{aligned}
\right\} \end{align*} we would have $C_1 \subseteq \bigcup_{1 \leq j \leq k} \bigcup_{1 \leq i \leq d} J_i^j$ and by the same argument as before, this would $\tau(C_1) = 0$, a contradiction. Then the same argument works for $C_2$ which gives us the desired conclusion.
\end{proof}
This lemma allows us to generate a set of points which will be useful in the proof of Theorem \ref{thm:general}. The next lemma helps to give a criterion that will describe the convex hull of a small subset of a compact set. In particular, this criterion will be used in the proof of Theorem \ref{thm:general}.

\begin{lemma}\label{thm:find_delta_general}
Let $C \subset \R^d$ compact such that $\tau(C) > 0$. Let $u \in C$ be a right endpoint of some gap of $C$. Then we can find $\delta = (\delta_1, \cdots, \delta_d)$ such that $u+\delta \in C$ where $\delta_i >0$. Furthermore, for any $\eta > 0$ we can, possibly, shrink $\delta$ so that $|\delta| < | \eta|$ and we can have $\delta_1 = \delta_i$ for all $i \in \{1, \cdots ,d\}$.
\end{lemma}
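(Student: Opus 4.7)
The plan is to argue directly along the diagonal direction $\mathbf{1} := (1, \dots, 1) \in \R^d$, since doing so will deliver the stronger ``furthermore'' conclusion $\delta_1 = \cdots = \delta_d$ simultaneously with the existence claim. Writing $L(t) := u + t\mathbf{1}$ for $t > 0$, the vector $\delta = t\mathbf{1}$ satisfies $|\delta| = t\sqrt{d}$, so it will suffice to show that the set $\{ t > 0 : L(t) \in C \}$ accumulates at $0$; choosing $t < \eta/\sqrt{d}$ then yields the size bound $|\delta| < \eta$.

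I will proceed by contradiction. Suppose some $t_0 > 0$ satisfies $L(t) \notin C$ for every $t \in (0, t_0)$. Then the open diagonal segment $S := \{ L(t) : 0 < t < t_0 \}$ is a connected subset of $C^c$, and therefore lies entirely inside a single path-connected component of $C^c$; call it $G'$, so $G'$ is either one of the bounded gaps $G_m$ or the unbounded gap $E$. Because $L(t) \to u$ as $t \to 0^+$ and $u \in C$, we get $u \in \partial G'$.

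Next, let $G$ denote the gap of $C$ for which $u$ is a right endpoint. By Definition \ref{def:right_endpoint} one has $L(t) \notin G$ for every $t > 0$, hence $S \cap G = \emptyset$, and since $\emptyset \neq S \subset G'$ this forces $G' \neq G$. Thus $u$ lies in the boundaries of two distinct complementary components of $C$, which gives $\dist(G, G') = 0$.

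The final step will be to show this contradicts $\tau(C) > 0$ through Lemma \ref{thm:another_thickness}, which rewrites $\tau(C) = \inf_n \dist(G_n, \bigcup_{i \in \Lambda_n} G_i \cup E) / \diam(G_n)$. If both $G = G_n$ and $G' = G_m$ are bounded, the one with the smaller diameter has the other in its $\Lambda$-comparison set, and $\dist(G_n, G_m) = 0$ forces the corresponding ratio to vanish. If one of $G, G'$ is the unbounded gap $E$, then $E$ sits in the comparison set of the other (bounded) gap, again forcing a zero ratio. Either outcome contradicts $\tau(C) > 0$, completing the argument. The main (mild) obstacle is that the original definition of thickness references only gaps of smaller index $\{1, \dots, n-1\}$, so a direct application would fail when the ``other'' gap $G_m$ has larger index than $G_n$; using the $\Lambda_n$-form from Lemma \ref{thm:another_thickness} sidesteps this issue cleanly.
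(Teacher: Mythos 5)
Your proof is correct and takes essentially the same approach as the paper's: look at the open diagonal segment issuing from $u$, note it must sit inside a single complementary component distinct from the gap $G$ of which $u$ is a right endpoint (the right-endpoint condition is exactly what excludes that segment from $G$), observe the two components touch at $u$ so are at distance $0$, and derive a contradiction with $\tau(C)>0$ via the $\Lambda_n$-form of thickness from Lemma~\ref{thm:another_thickness}. Your write-up is in fact a bit cleaner than the paper's in two spots it glosses over: you explicitly argue $G'\neq G$, and you explicitly split the final contradiction into the bounded/bounded and bounded/unbounded cases rather than silently invoking a ``WLOG $\diam(G_i)\leq\diam(G_n)$'' that does not literally apply when one of the gaps is $E$.
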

\begin{proof}
Assume for contradiction that we cannot find $\delta$ such that $u+\delta \in C$. So now let $\eta > 0$ be given and restrict $\delta$ such that $|\delta| < |\eta|$. Let $G_n$ denote the gap for which $u \in \partial G_n$. Define $$L_u := \{u(1-t) + t(u+\delta): t\in (0,1) \},$$ i.e. $L_u$ is the line segment connecting $u$ and $u+\delta$ but not including $u$ and $u+\delta$. Recall that since $u$ is a right endpoint of $G_n$, we have that for any $\varepsilon > 0$, $u + (\varepsilon, \cdots , \varepsilon) \notin G_n$, so we can take $\delta$ such that $\delta_1 = \delta_i$ for all $i \in \{1, \cdots d\}$. Furthermore, by the contradiction assumption we have $L_u \cap C = \emptyset$ and therefore there exists some index set $\Lambda$ such that $L_u \subset \bigcup_{i \in \Lambda}G_i$ where the $G_i$ are gaps of $C$ and such that $L_u \cap G_i \neq \emptyset$. We claim there exists a single gap of $C$, say $G_i$, such that $L_u \subset G_i$. To see this, assume this is not true. Thus, we have that  $L_u \cap G_i \neq \emptyset$ and $G_i \cap G_j = \emptyset$ for all $i,j \in \Lambda$. This implies that $L_u$ is a disconnected space which is a contradiction. 
Therefore, $L_u \subset G_i$ for some gap $G_i$ of $C$. But this implies $\dist(G_i , G_n) = 0$. Without loss of generality, assume $\diam(G_i) \leq \diam(G_n)$. Then $$0 < \tau(C) \leq \frac{\dist\left(G_i , \bigcup_{j \in \Lambda_i} G_j \cup E\right)}{\diam(G_i)} \leq \frac{\dist(G_i, G_n)}{\diam(G_i)} = 0 $$ a contradiction.  Note that $\Lambda_i$ is being defined as in Lemma \ref{thm:another_thickness}.
\end{proof}

Checking that a compact set is not contained in a gap of another compact set is tricky (which is a hypothesis of Theorem \ref{thm:gap_lemma}), so the next lemma and theorem gives a sufficient and easier criterion to work with. Let $\conv(\cdot)$ denote the convex hull of a set.

\begin{lemma}\label{thm:non-empty}
Let $C \subset \R^d$ be a compact set such that $\tau(C) > 0$. Let $U = \conv(C)^{\circ}$. Then $U$ is non-empty.
\end{lemma}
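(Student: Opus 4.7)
My plan is to argue by contradiction, using Fact \ref{thm:hyperplanes_zero_thickness} as the main tool. First, I would suppose that $U = \conv(C)^{\circ} = \emptyset$, so that $\conv(C)$ is a convex subset of $\R^d$ with empty interior. Next, I would invoke the standard convex-geometry fact that any convex subset of $\R^d$ with empty interior must lie in a proper affine subspace of $\R^d$, and in particular is contained in some affine hyperplane $H$ of dimension $d-1$. Since $C \subseteq \conv(C) \subseteq H$, and $C$ is compact and hence bounded, I would then pick a closed ball $\overline{B} \subset \R^d$ large enough that $C \subseteq \overline{B} \cap H$; the set $\overline{B} \cap H$ is a bounded subset of a hyperplane, i.e., a bounded hyperplane in the sense of Fact \ref{thm:hyperplanes_zero_thickness}. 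Applying that fact with $n=1$ and $J_1 = \overline{B} \cap H$ yields $\tau(C) = 0$, contradicting the hypothesis $\tau(C) > 0$, and we conclude $U \neq \emptyset$.

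The only non-trivial ingredient is the convex-geometry statement that a convex subset of $\R^d$ with empty interior sits inside an affine hyperplane. I expect this step to be routine: the dimension of a convex set $K$ agrees with the dimension of its affine hull $\mathrm{aff}(K)$, and if $\dim(\mathrm{aff}(K)) = d$ then $K$ contains $d+1$ affinely independent points and hence, by convexity, a full $d$-dimensional simplex, which has non-empty interior in $\R^d$. Empty interior therefore forces $\dim(\mathrm{aff}(K)) \leq d-1$, giving the required containment in a hyperplane. Overall I do not anticipate any serious obstacle: once Fact \ref{thm:hyperplanes_zero_thickness} is in hand, the statement reduces to this single elementary observation, and the compactness of $C$ is exactly what lets us replace an unbounded affine hyperplane by a bounded piece of it before quoting the fact.
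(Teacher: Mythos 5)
Your proof is correct but takes a genuinely different route from the paper's. The paper's proof argues directly: since $\conv(C)$ is convex, its complement is connected, so $\conv(C)$ has no bounded gaps; together with $\conv(C)^{\circ}=\emptyset$ this forces $\tau(\conv(C))=0$ by the definition of thickness, and the paper then asserts the monotonicity $\tau(C)\leq\tau(\conv(C))$ (justified only informally, by the remark that $\conv(C)$ is obtained by ``removing all the bounded gaps of $C$'') to reach the contradiction $0<\tau(C)\leq 0$. You instead invoke the elementary convex-geometry fact that a convex set with empty interior lies in an affine hyperplane, so that $C\subseteq\conv(C)\subseteq H$; then compactness of $C$ lets you trap it inside a bounded hyperplane $\overline{B}\cap H$, at which point Fact~\ref{thm:hyperplanes_zero_thickness} yields $\tau(C)=0$ directly. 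Your approach has the advantage of not relying on the unproved inequality $\tau(C)\leq\tau(\conv(C))$ (which the paper never establishes as a general principle and which would itself need an argument in the borderline case when both sides could be $0$); instead it reduces cleanly to a fact the paper already states. The paper's approach is shorter and avoids the hyperplane detour, but is less self-contained. Both proofs are by contradiction and both ultimately exploit the same dichotomy in Definition~\ref{def:thickness} for sets with no bounded gaps.
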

\begin{proof}
 Assume for contradiction that $U = \emptyset$. Then note that $\conv(C)$ has no bounded gaps. Then since $U = \emptyset$, by definition of thickness, this implies $\tau(\conv(C)) = 0$. But note that $\tau(C) \leq \tau(\conv(C))$ since $\conv(C)$ has simply removed all of the bounded gaps of $C$ which then means $$0 < \tau(C) \leq \tau(\conv(C)) = 0 $$ a contradiction.
\end{proof}

\begin{theorem}\label{thm:convex_hull}
Let $C_1$ and $C_2$ be compact sets in $\R^d$ with $\tau(C_1), \tau(C_2) > 0$ and such that their convex hulls are linked. That is, by setting $U_1 =\conv(C_1)^{\circ}$ and $U_2 = \conv(C_2)^{\circ}$, we have that $U \cap V \neq \emptyset$, $\left( \partial U\right) \backslash V \neq \emptyset$, and $\left( \partial V \right) \backslash U \neq \emptyset$. Then neither $C_1$ or $C_2$ is contained in a gap of the other.
\end{theorem}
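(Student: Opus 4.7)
My plan is proof by contradiction: suppose, without loss of generality, that $C_1 \subseteq G$ for some gap $G$ of $C_2$, and derive a contradiction from the linked hypothesis. The key geometric observation driving the argument is that every bounded gap of $C_2$ lies automatically inside the open convex hull $U_2$. Indeed, if $G$ is a bounded path-connected component of $C_2^{c}$ and some $p \in G$ were to lie outside $\conv(C_2)$, then $p$ could be connected to infinity by a path entirely inside $\R^d \setminus \conv(C_2) \subseteq C_2^{c}$, forcing $G$ to merge with the unbounded component of $C_2^{c}$ and contradicting the boundedness of $G$. Hence $G \subseteq \conv(C_2)$, and because $G$ is open, $G \subseteq \conv(C_2)^{\circ} = U_2$.

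With this in hand, the bounded-gap case is nearly immediate. We have $C_1 \subseteq G \subseteq U_2$, and since $U_2$ is an open convex set containing the compact set $C_1$, convexity forces $\conv(C_1) \subseteq U_2$. Therefore $\partial U_1 \subseteq \overline{U_1} = \conv(C_1) \subseteq U_2$, which contradicts the linked hypothesis $(\partial U_1) \setminus U_2 \neq \emptyset$. Lemma \ref{thm:non-empty}, combined with $\tau(C_1), \tau(C_2) > 0$, guarantees that both $U_1$ and $U_2$ are nonempty so that the argument is not vacuous and the identification $\overline{U_1} = \conv(C_1)$ is valid. The symmetric conclusion for $C_2$ follows at once by swapping the roles of the two sets and invoking $(\partial U_2) \setminus U_1 \neq \emptyset$ in place of $(\partial U_1) \setminus U_2 \neq \emptyset$.

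The main obstacle is the case where $G$ is the unbounded gap $E_{C_2}$, since then $G$ need not be contained in $U_2$ and the convexity-based collapse above no longer applies directly. My plan in this case is to exploit all three linked conditions simultaneously: if $C_1 \subseteq E_{C_2}$ while $U_1 \cap U_2 \neq \emptyset$ and each convex-hull boundary has points outside the other's interior, then $\conv(C_1)$ must stretch from a point interior to $\conv(C_2)$ out to a boundary point of $\conv(C_1)$ that escapes $\overline{U_2}$. I would then argue that along such a path one of the extreme-point contributions to $\conv(C_1)$ coming from $C_1$ itself must meet either $C_2$ or a bounded component of $C_2^{c}$ distinct from $E_{C_2}$, contradicting $C_1 \subseteq E_{C_2}$. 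The positivity of $\tau(C_1), \tau(C_2)$, via Lemma \ref{thm:non-empty}, gives $U_1$ and $U_2$ full $d$-dimensional interiors and so rules out the degenerate low-dimensional configurations in which the argument could fail; making this step precise is the delicate part of the proof.
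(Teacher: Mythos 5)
Your bounded-gap argument is correct and, in fact, is a cleaner version of the paper's argument. Both hinge on the same containment chain: a bounded gap $G$ of $C_2$ lies in $\conv(C_2)$, so being open it lies in $U_2 = \conv(C_2)^{\circ}$; convexity of $U_2$ then forces $\conv(C_1)\subseteq\conv(G)\subseteq U_2$, and therefore $\partial U_1\subseteq\conv(C_1)\subseteq U_2$, contradicting $(\partial U_1)\setminus U_2\neq\emptyset$. The paper phrases this via an auxiliary set $U_3=\conv(G^2)$ and a case split, but the logical content is the same, and you have stated it more transparently by making the containment $G\subseteq U_2$ explicit rather than leaving it implicit as the paper does.

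However, your treatment of the unbounded-gap case is not a proof; it is a statement of a plan. You write that you ``would then argue that\dots one of the extreme-point contributions\dots must meet either $C_2$ or a bounded component,'' and you concede that ``making this step precise is the delicate part.'' That step is precisely where the argument lives or dies, because once $G$ is the unbounded gap $E_{C_2}$, the inclusion $\conv(G)\subseteq U_2$ fails ($\conv(E_{C_2})$ is unbounded while $U_2$ is bounded), so the convexity collapse you used above gives nothing, and none of the three linking conditions by itself rules out $C_1\subset E_{C_2}$. So this is a genuine gap in your proposal. It is worth noting, though, that the paper's own proof silently relies on the same containment $\conv(G^2)\subseteq U_2$ (without which the inclusion $(\partial\conv(C_1))\setminus U_2\subseteq(\partial\conv(C_1))\setminus U_3$ in its displayed chain is unjustified) and never separates the unbounded case; your explicit identification of this as the hard case is a sharper diagnosis of where the difficulty lies than what appears in the paper, even though you did not resolve it.
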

\begin{proof}
Set $U_1 = \conv(C_1)^{\circ}$ and $U_2 = \conv(C_2)^{\circ}$. By Lemma \ref{thm:non-empty} both $U_1$ and $U_2$ are non-empty. Assume for contradiction that $C_1$ is contained in a gap of $C_2$, say $G^2$. So $C_1 \subseteq G^2$. Note that $C_1 \subset G^2$ since $G^2$ is open and $C_1$ is closed. Then $\conv(C_1) \subseteq \conv(G^2)$. Note that since $G^2$ is open, $\conv(G^2)$ is open. Thus, $$U_3 := \conv(G^2)^{\circ} = \conv(G^2).$$ Note that $\partial U_1 \subseteq \partial \conv(C_1)$. Then if $\partial \conv(C_1) \subset U_3$ we're done since this would imply $$\emptyset \neq (\partial U_1) \backslash U_2 \subseteq \left( \partial \conv(C_1) \right) \backslash U_2 \subseteq  \left( \partial \conv(C_1) \right) \backslash U_3 = \emptyset$$ which yields a contradiction. So now assume $\partial \conv(C_1) \supseteq U_3$. To see that this implies these sets are equal, take $x \in \partial \conv(C_1)$. Then since $C_1$ is closed, $\conv(C_1)$ is also closed and thus we have $\partial \conv(C_1) \subseteq \conv(C_1)$ and therefore, $x \in \conv(C_1)$. But $$x \in \conv(C_1) \subseteq \conv(G^2) = \conv(G^2)^{\circ} = U_3$$ and therefore, $U_3 = \partial \conv(C_1)$ which is a contradiction since $U_3$ is open and $\partial \conv(C_1)$ is closed. This gives the desired conclusion.
\end{proof}

Thus, rather than checking that the two sets are not contained in a gap of the other, we will use the above criterion. We can now finally prove the main ingredient needed which is Theorem \ref{thm:general}. 

\begin{proof}[Proof of Theorem \ref{thm:general}]
 Let $u^1$ and $u^2$ be right endpoints of some gap (not necessarily bounded gaps) of $C_1$ and $C_2$ respectively. By Lemma \ref{thm:different_coordinates} we may assume  $(u^1, u^2)$ lives to the upper right of $x^0$, i.e. that for $i \in \{1, \cdots , d\}$ and $j \in \{ d+1, \cdots , 2d\}$ we have $x_i^0 < u_i^1$ and $x_j^0 < u_j^2$. We assume this so that we are in the setting of Lemma \ref{thm:thickness_nearly_preserved_dimension_d}. By Lemma \ref{thm:find_delta_general} we find $\delta^j \in \R^d$ such that $u^j, u^j + \delta^j \in C_j$ for which $\delta_1^j = \delta_i^j$ for all $i$. So set $\widetilde{C_j} := C_j \cap \left( [u_1^i, u_1^i + \delta_1^i] \times \cdots \times [u_d^i , u_d^i + \delta_1^i] \right)$. In particular we can also choose $\delta^i$ small enough so that $\widetilde{C_1}$ is in the domain of $g_{x,t}$ and for which $\tau(\widetilde{C_2}) \tau(g_{x,t}(\widetilde{C_1})) > 1$ by Lemma \ref{thm:find_delta_general} and Lemma \ref{thm:thickness_nearly_preserved_dimension_d}. Note that we are invoking Lemma \ref{thm:thickness_nearly_preserved_dimension_d} for a neighborhood of $x$ values centered at $x^0$. Recall that by Lemma \ref{thm:nice_conditions}, we can translate and rotate our set so that we may also assume $u^1$ and $u^2$ lie on the diagonal, i.e. $u_i^1 = u_1^1$ and $u_i^2 = u_1^2$ for all $i \in \{1, \cdots d\}$, and that $x^0 = \vec{0}$. To use the Gap Lemma we find $(x,t)$ such that $\widetilde{C_2}$ and $g_{x,t}(\widetilde{C_1})$ are linked where $x$ is coming from a neighborhood around $x^0$. The parameters $x$ and $t$ are important for this proof so let $g_{x,t}^i$ denote $g^i$. Note that each $g_{x,t}^i$ is decreasing in its argument. 
\begin{figure}[]
\centering
\begin{tikzpicture}
 
    \node[rectangle,draw, minimum width = 5cm, 
    minimum height = 6cm] (r) at (0,0) {};
    \node[rectangle,draw, minimum width = 5cm, 
    minimum height = 6cm] (r) at (-2,-2) {};
    
    \fill [color=black] (-2.5,3) circle (0.1);
    \node at (-2.5,3.5) {$(u_1^2, u_2^2 + \delta_2^2)$};
    \fill [color=black] (2.5,3) circle (0.1);
    \node at (2.5, 3.5) {$(u_1^2 + \delta_1^2, u_2^2 + \delta_2^2)$};
    \fill [color=black] (2.5,-3) circle (0.1);
    \node at (2.5, -3.5) {$(u_1^2 + \delta_1^2, u_2^2)$};
    \fill [color=black] (-2.5,-3) circle (0.1);
    \node at (-2.5, -3.5) {$(u_1^2, u_2^2)$};
    
    \fill [color=black] (-4.5,1) circle (0.1);
    \node at (-4.5,1.5) {$(g_{x,t}^1(u_1^1 + \delta_1^1), g_{x,t}^2(u_2^1))$};
    \fill [color=black] (0.5,1) circle (0.1);
    \node at (0.5,1.5) {$(g_{x,t}^1(u_1^1), g_{x,t}^2(u_2^1))$};
    \fill [color=black] (-4.5,-5) circle (0.1);
    \node at (-4.5,-5.5) {$(g_{x,t}^1(u_1^1 + \delta_1^1), g_{x,t}^2(u_2^1+\delta_2^1))$};
    \fill [color=black] (0.5,-5) circle (0.1);
    \node at (0.5,-5.5) {$(g_{x,t}^1(u_1^1), g_{x,t}^2(u_2^1+\delta_2^1))$};

\end{tikzpicture}
\caption{How we want the conditions of $U$ to look like in $\R^2$. The (filled in) rectangles represent the convex hulls of $\widetilde{C_2}$ and $g_{x,t}(\widetilde{C_1})$ and thus, their convex hulls will be linked provided that they are in this type of configuration.}
\end{figure}
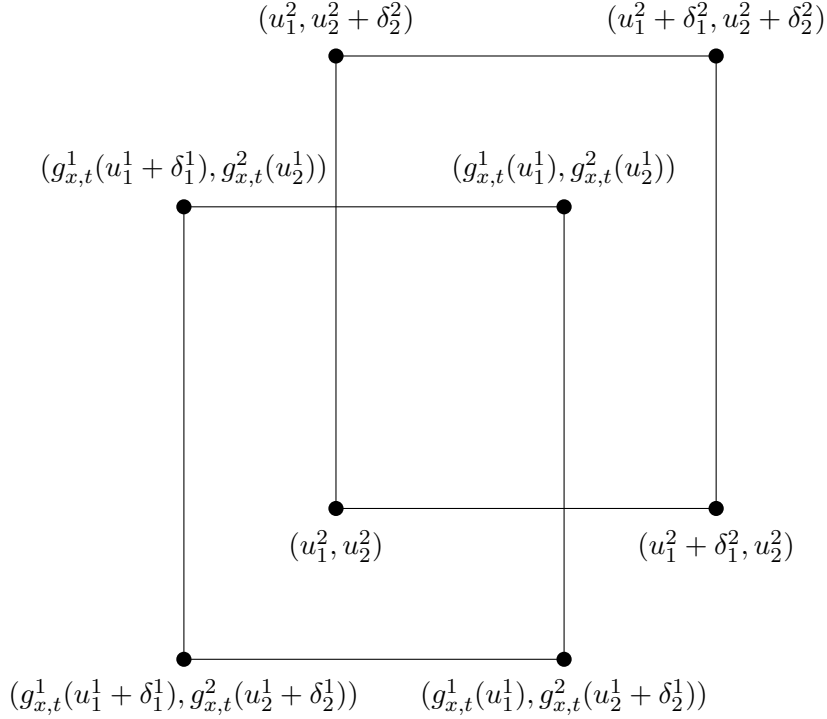
So consider the set $$U:= \{ (x,t) \in \R^{2d} \times \R : g_{x,t}^1(u_1^1 + \delta_1^1) < u_1^2 < g_{x,t}^1(u_1^1) < u_1^2 + \delta_1^2, \cdots , g_{x,t}^d(u_d^1 + \delta_d^1) < u_d^2 < g_{x,t}^d(u_d^1) < u_d^2 + \delta_d^2\}.$$

Again, note that $\delta_1^j = \delta_i^j$. We simply use this definition of $U$ in terms of $\delta_i^j$ to make $U$ look symmetric. Then if $(x,t) \in U$, we have that $\widetilde{C_2}$ and $g_{x , t}(\widetilde{C_1})$ are linked because their convex hulls will be rectangles in $\R^d$ and for $(x,t) \in U$, this guarantees that these rectangles intersect each other and exist outside of each other. Then by Theorem \ref{thm:convex_hull} and Theorem \ref{thm:gap_lemma}, we will get that $\widetilde{C_2} \cap g_{x,t}(\widetilde{C_1}) \neq \emptyset$. Note this means we are using Lemma \ref{thm:find_delta_general}.

We will show that $U$ is a non-empty open set containing a point of the form $(x^0, t) \in U$. The claim then follows because we can take open neighborhoods $S,T$ of $x^0 ,t$ respectively such that $$T \subset \bigcap_{x \in S} \Delta_x(C_1 \times C_2) $$ where $S$ is some subset of the neighborhood found in Lemma \ref{thm:thickness_nearly_preserved_dimension_d}. To see that $U$ is non-empty, set $\frac{1}{d} (t_1)^2 = (x_1^0 - u_1^1)^2 + (x_{d+1}^0 - u_1^2)^2$. By construction, we have $g_{x^0,t_1}^1(u_1^1) = u_1^2$. Since each $g_{x,t}^i$ is increasing in $t$,  for any $t > t_1$ we will have $g_{x^0,t}^1(u_1^1) > u_1^2$. Also, $g_{x,t}(z)$ is a continuous function of $t$ so that when $t$ is sufficiently close to $t_1$ we will get $g_{x^0,t}^1(u_1^1 + \delta_1^1) < u_1^2$ and $g_{x^0,t}^1(u_1^1) < u_1^2 + \delta_1^2$. This satisfies the first condition of $U$.

For the rest of the conditions of $U$, set $\frac{1}{d} (t_i)^2 = (x_i^0 - u_i^1)^2 + (x_{i+d}^0 - u_i^2)^2$. But by assumption, since $x^0 = \vec{0}$, $u_i^1 = u_1^1$, and $u_i^2 = u_1^2$ this yields $$\frac{1}{d} (t_i)^2 = (x_i^0 - u_i^1)^2 + (x_{i+d}^0 - u_i^2)^2 = (u_1^1)^2 + (u_1^2)^2 = (x_1^0 - u_1^1)^2 + (x_{d+1}^0 - u_1^2)^2 = \frac{1}{d} (t_1)^2.$$ So by the same argument as above, the same $t$ which worked for $g_{x^0,t}^1$ will work for $g_{x^0,t}^i$ as well. This satisfies the remaining conditions of $U$ and therefore there exists a point of the form $(x^0, t) \in U$. To see that $U$ is open, note that for a fixed input $z$, each $g_{x,t}^i(z)$ is continuous in $(x,t)$ and therefore, $U$ is open. This satisfies the claim.
\end{proof}

Finally, we conclude this paper with the proof of Theorem \ref{thm:pinned_tree_set} which we show by induction.

\begin{proof}[Proof of Theorem \ref{thm:pinned_tree_set}]
To start the induction, we first let $k=1$ and let $x^0 \in C_1 \times C_2$ denote our fixed point. Then observe $$\Delta_{T_{x}^1} (C_1 \times C_2) = \Delta_{x}(C_1 \times C_2) \backslash \{0\}. $$ Thus, by Theorem \ref{thm:general} there exists and neighborhood $S$ about $x$ such that the set $$\bigcap_{\widetilde{x} \in S} \Delta_{\widetilde{x}}(C_1 \times C_2) \backslash \{0\} \subset \Delta_{x}(C_1 \times C_2) \backslash \{0\} $$ has non-empty interior. This proves the base case. 

Now take $k > 1$. Then define $\Delta_{\widetilde{T_{x}^k}}(E)$ to be the pinned tree distance set of $k+1$ vertices where $x$ is labeled as the first leaf, the first coordinate is always a distance from the fixed point $x$ to the unique vertex which forms an edge with $x$, and the rest of the coordinates behave in the same way that the set $\Delta_{T^{k-1}}(E)$ behaves. In set form, $$\Delta_{\widetilde{T_{x}^k}}(E) = \left\{(|x-x^n|, |x^i-x^j|)_{i \sim j} :
\begin{aligned}
&x^2, \cdots, x^{k+1} \in E, x^i \neq x^j, x \text{ is a leaf}, \\
&x^n \text{ unique vertex forming edge with } x 
\end{aligned}
\right\} $$ where $(|x-x^n|, |x^i-x^j|)_{i \sim j} \in \R^k$. Note that $$\Delta_{T_x^k}(C_1 \times C_2) \supset \Delta_{\widetilde{T_x^k}}(C_1 \times C_2) = \left(\Delta_x(C_1 \times C_2) \backslash \{0\} \right) \times \Delta_{T^{k-1}}(C_1 \times  C_2) .$$ We have already seen that $\Delta_x(C_1 \times C_2) \backslash \{0\}$ has non-empty interior. Furthermore, $$\Delta_{T^{k-1}}(C_1 \times C_2) = \bigcup_{\widetilde{x} \in C_1\times C_2} \Delta_{T_{\widetilde{x}}^{k-1}} (C_1 \times C_2) $$ and by the inductive hypothesis, $\Delta_{T_{\widetilde{x}}^{k-1}}(C_1 \times C_2)$ has non-empty interior for any $\widetilde{x} \in C_1 \times C_2$ which implies $\Delta_{T^{k-1}}(C_1 \times C_2)$ has  non-empty interior, and thus, the claim is satisfied.
\end{proof}

\bibliographystyle{plain}
\bibliography{references}

\end{document}